\newcommand{\MOMSdgl}[1]{\left\{\begin{aligned}
		#1
	\end{aligned}\right.}
\newcommand{\MOMSnsX}{{n_{\scalebox{0.42}{$X$}}}}
\newcommand{\MOMSnsY}{{n_{\scalebox{0.42}{$Y$}}}}
\newcommand{\MOMSnsXi}{{n_{\scalebox{0.42}{$\Xi$}}}}
\newcommand{\MOMSdbar}{\, \mathrm{d}\llap{\raisebox{0.85ex}{$\scriptstyle-\!$}}}
\newcommand{\MOMSd}{\, \mathrm{d}}
\newcommand{\MOMSsklammer}[1]{\left\langle#1\right\rangle}
\newcommand{\MOMSnorm}[1]{\left\|#1\right\|}
\newcommand{\MOMStranspop}[1]{{}^{\mathsf t} \! {#1} }
\newcommand{\MOMSmenge}[1]{\left\lbrace #1\right\rbrace }
\newcommand{\MOMSabs}[1]{\left|#1\right|}
\DeclareMathOperator{\dist}{dist}
\DeclareMathOperator{\sign}{sign}
\DeclareMathOperator{\supp}{supp}
\newtheorem{theorem}{Theorem}[section]
\newtheorem{lemma}[theorem]{Lemma}
\newtheorem{remark}[theorem]{Remark}
\newtheorem{corollary}[theorem]{Corollary}
\newtheorem{proposition}[theorem]{Proposition}
\newtheorem{definition}[theorem]{Definition}
\newtheorem{example}[theorem]{Example}
\begin{document}

\title{On the Measurability of Stochastic Fourier Integral Operators}

\author{Michael Oberguggenberger\thanks{Unit of Engineering Mathematics, University of Innsbruck,
Technikerstra\ss e 13, 6020 Innsbruck,
Austria, (michael.oberguggenberger@uibk.ac.at)}
 \and
Martin Schwarz\thanks{Unit of Engineering Mathematics, University of Innsbruck,
Technikerstra\ss e 13, 6020 Innsbruck,
Austria, (martin.schwarz@uibk.ac.at)}
}

\date{}
\maketitle

\abstract{This work deals with the measurability of Fourier integral operators (FIOs) with random phase and amplitude functions. The key ingredient is the proof that FIOs depend continuously on their phase and amplitude functions, taken from suitable classes. The results will be applied to the solution FIO of the transport equation with spatially random transport speed as well as to FIOs describing waves in random media.}


\section{Introduction}
In the theory of hyperbolic partial differential equations (PDEs), Fourier integral operators (FIOs) have become an important tool to examine certain properties of the solution, e.g., the propagation of singularities \cite{BoggiattoP1996,MascarelloM1997,METaylor1981}. When studying waves in random media, the coefficients of the underlying PDEs are random fields. This has become important in seismology \cite{Fouque2007, NairWhite1991} and in material science \cite{LOS2017, MOberguggenberger2018}. As the phase and amplitude functions of the FIOs producing a solution or a parametrix are functions of the coefficients of the underlying PDE, one has to ensure that the FIO, respectively its action, stays measurable. The question of measurability of a FIO arises also in its own right, when a deterministic phase or amplitude function is subjected to a stochastic perturbation \cite{MOberguggenberger2014}. This work is dedicated to providing a rigorous proof of various continuity and measurability properties.

Consider a FIO of the form
\begin{align*}
A_{\Phi,a} [u]({\boldsymbol{x}}) =\frac{1}{(2\pi)^n}\int_{}\int_{} {\mathrm{e}}^{i\Phi({\boldsymbol{x}},{\boldsymbol{y}},{\boldsymbol{\xi}})} a({\boldsymbol{x}},{\boldsymbol{y}},{\boldsymbol{\xi}}) u({\boldsymbol{y}}) \MOMSd {\boldsymbol{y}} \MOMSd {\boldsymbol{\xi}}.
\end{align*}
The first task will be to show that, for $\psi\in {\mathcal D}(Y)$, respectively $u\in {\mathcal E}'(Y)$, the maps
\begin{equation}\label{eq:cont}
   (\Phi,a) \mapsto A_{\Phi,a} [\psi],\quad (\Phi,a) \mapsto A_{\Phi,a} [u]
\end{equation}
are continuous with values in ${\mathcal C}^\infty(X)$, respectively ${\mathcal D}'(X)$, on suitable spaces of phase and amplitude functions. (Here $X$ and $Y$ are open subsets of ${\mathbb{R}}^{\MOMSnsX}$ and ${\mathbb{R}}^{\MOMSnsY}$.) Equipping these spaces with their Borel $\sigma$-algebra, we consider random functions
\[ \omega\to (\Phi_\omega({\boldsymbol{x}},{\boldsymbol{y}},{\boldsymbol{\xi}}),a_\omega({\boldsymbol{x}},{\boldsymbol{y}},{\boldsymbol{\xi}})) \]
on a probability space $(\Omega,{\mathcal{F}},{\mathbb{P}})$. The second task will be to infer that the maps
\[ \omega\mapsto A_{\Phi_\omega,a_\omega} [\psi], \quad \omega\mapsto A_{\Phi_\omega,a_\omega} [u]\]
are measurable as well. The applicability of these results will be demonstrated in three examples: the transport equation and the half wave equation, both with a spatially random propagation speed, and a random perturbation of the solution operator to the wave equation.

The plan of the paper is as follows. We start by recalling required facts from the classical theory of Fourier integral operators. In the subsequent section, we prove the continuity and measurability results. The final section addresses the announced applications. The paper is part of a larger program aiming at studying wave propagation in random media by means of stochastic Fourier integral operators \cite{MOberguggenberger2014, MPSchwarz2019}; it provides the probabilistic basis for this program.

\section{Classical theory of oscillatory integrals and FIOs}
\subsection{Oscillatory integrals} \label{sec:osc_int}

Let $\MOMSnsY,\MOMSnsXi\in{\mathbb{N}}$ and let $Y\subset{\mathbb{R}}^{\MOMSnsY}$ be an open set. The subsequent short exposition follows \cite{MAShubin1987}. Let
\begin{align}\label{eqn:osc_int}
I_\Phi(au)=\int_{{\mathbb{R}}^{\MOMSnsXi}}\int_{Y} {\mathrm{e}}^{i\Phi({\boldsymbol{y}},{\boldsymbol{\xi}})} a({\boldsymbol{y}},{\boldsymbol{\xi}}) u({\boldsymbol{y}}) \MOMSd {\boldsymbol{y}} \MOMSd {\boldsymbol{\xi}} .
\end{align}
Here $u\in{\mathcal{D}}(Y)$ is a smooth function with compact support and $\Phi:Y\times{\mathbb{R}}^{\MOMSnsXi}$ is a \emph{phase function}, which means that $\Phi\big|_{Y\times({\mathbb{R}}^{\MOMSnsXi}\backslash \boldsymbol{\MOMSmenge{0}})}$ is smooth, real valued and positively homogeneous of degree $1$ in ${\boldsymbol{\xi}}$. Furthermore, $\Phi$ does not have any critical points in ${\mathbb{R}}^{\MOMSnsXi}\backslash {\MOMSmenge{\boldsymbol 0}}$, i.e., for all ${\boldsymbol{y}}\in Y$ and ${\boldsymbol{\xi}}\in{\mathbb{R}}^{\MOMSnsXi}\backslash {\MOMSmenge{\boldsymbol 0}}$
\[
[\partial_{y_1} \Phi({\boldsymbol{y}},{\boldsymbol{\xi}}),\ldots,\partial_{y_{\MOMSnsY}} \Phi({\boldsymbol{y}},{\boldsymbol{\xi}}),\partial_{\xi_1} \Phi({\boldsymbol{y}},{\boldsymbol{\xi}}),\ldots,\partial_{\xi_{\MOMSnsXi}} \Phi({\boldsymbol{y}},{\boldsymbol{\xi}})]^{{\mathsf{T}}} \neq {\boldsymbol 0}.
\]
The function $a:Y\times{\mathbb{R}}^{\MOMSnsXi}$ is a H{\"o}rmander symbol of class $S_{\varrho,\delta}^d(Y\times{\mathbb{R}}^{\MOMSnsXi})$, $d\in{\mathbb{R}}$, $0\leq\delta<1$ and $0< \varrho\leq 1$. That means, it is smooth and for any given multi-indices ${\boldsymbol{k}}\in{\mathbb{N}}^{\MOMSnsY},{\boldsymbol{l}}\in{\mathbb{N}}^{\MOMSnsXi}$  and any compact $K \subset Y$ there exists a constant $C_{{\boldsymbol{l}},{\boldsymbol{k}},K}$ such that
\[ \MOMSabs{\partial_{{\boldsymbol{\xi}}}^{{\boldsymbol{l}}} \partial_{{\boldsymbol{y}}}^{{\boldsymbol{k}}} \ a({\boldsymbol{y}},{\boldsymbol{\xi}})}\leq C_{{\boldsymbol{l}},{\boldsymbol{k}},K} \MOMSsklammer{{\boldsymbol{\xi}}}^{d-\varrho\MOMSabs{{\boldsymbol{l}}}+\delta\MOMSabs{{\boldsymbol{k}}}},\]
where ${\boldsymbol{y}}\in K$ and ${\boldsymbol{\xi}} \in{\mathbb{R}}^{\MOMSnsXi}$. As usual, we write $\MOMSsklammer{{\boldsymbol{\xi}}}=(1+\MOMSnorm{{\boldsymbol{\xi}}}^2)^{1/2}$.

In general, $({\mathrm{e}}^{i\Phi({\boldsymbol{y}},{\boldsymbol{\xi}})} a({\boldsymbol{y}},{\boldsymbol{\xi}}) u({\boldsymbol{y}}))$ is not absolutely integrable; the oscillatory integral \eqref{eqn:osc_int} has to be regularized. We recall the usual procedure, as presented, e.g., in \cite{MAShubin1987}.

Let $\chi\in{\mathcal{D}}({\mathbb{R}}^\MOMSnsXi)$ with $\chi({\boldsymbol{\xi}})\equiv 1$ for $\MOMSnorm{{\boldsymbol{\xi}}}<1$ and $\chi({\boldsymbol{\xi}})\equiv0$ for $\MOMSnorm{{\boldsymbol{\xi}}}>2$. Furthermore, let
\begin{equation}\label{eqn:the_equation_of_r}
 r({\boldsymbol{\xi}},{\boldsymbol{y}})=\bigg(\sum_{l=1}^{\MOMSnsXi} \MOMSnorm{{\boldsymbol{\xi}}}^2 \MOMSabs{\partial_{ \xi_l}\Phi({\boldsymbol{y}},{\boldsymbol{\xi}}) }^2+\sum_{k=1}^{n_y} \MOMSabs{\partial_{ y_k} \Phi({\boldsymbol{y}},{\boldsymbol{\xi}})}^2\bigg),
\end{equation}
and
\begin{equation}\label{eq:coeffL}
{\underline{\alpha}}_l =
\frac{-i}{r} (1-\chi) \MOMSnorm{{\boldsymbol{\xi}}}^2 (\partial_{ \xi_l} \Phi),  \quad
{\underline{\beta}}_k = \frac{-i}{r} (1-\chi) (\partial_{ y_k} \Phi),\quad
{\underline{\gamma}} =\chi.
\end{equation}
Let $L$ be the differential operator
\begin{align}\label{eqn:regularizing_operator_L_ohne_produktregel}
L f=-\sum_{l=1}^{\MOMSnsXi}{\partial_{\xi_l}} ({\underline{\alpha}}_l f  ) - \sum_{k=1}^{\MOMSnsY} {\partial_{y_k}} ({\underline{\beta}}_k f) +{\underline{\gamma}} f.
\end{align}
 Then, the formal adjoint operator $\MOMStranspop{L}$
\[ \MOMStranspop{L}=\bigg(\sum_{l=1}^{\MOMSnsXi} {\underline{\alpha}}_l{\partial_{ \xi_l}} + \sum_{k=1}^{\MOMSnsY} {\underline{\beta}}_k  {\partial_{ y_k}} +{\underline{\gamma}} \bigg), \]
satisfies
\[ \MOMStranspop{L} {\mathrm{e}}^{i\Phi}={\mathrm{e}}^{i\Phi}. \]
Furthermore, ${\underline{\alpha}}_l\in S^0_{1,0}(Y\times {\mathbb{R}}^{\MOMSnsXi}),{\underline{\beta}}_k\in S^{-1}_{1,0}(Y\times {\mathbb{R}}^{\MOMSnsXi})$ and ${\underline{\gamma}}\in S^{-1}_{1,0}(Y\times {\mathbb{R}}^{\MOMSnsXi})$.

Finally, one can choose a $\kappa\in{\mathbb{N}}$ large enough and iteratively apply $L$ to \eqref{eqn:osc_int}, and get a convergent integral by
\[ I_\Phi(au)= \int_{{\mathbb{R}}^{\MOMSnsXi}} \int_{Y} {\mathrm{e}}^{i\Phi({\boldsymbol{y}},{\boldsymbol{\xi}})} L^\kappa(a({\boldsymbol{y}},{\boldsymbol{\xi}}) u({\boldsymbol{y}})) \MOMSd {\boldsymbol{y}} \MOMSd {\boldsymbol{\xi}}. \]

\subsection{Classical theory of FIOs} \label{subsec:class_theory}
Let $\MOMSnsX,\MOMSnsY,\MOMSnsXi\in{\mathbb{N}}$ and $X$ resp. $Y$ be an open subset of ${\mathbb{R}}^{\MOMSnsX}$ resp. ${\mathbb{R}}^ {\MOMSnsY}$. A Fourier integral operator is an operator of the form
\begin{align} \label{eqn:FIO_classic}
A_{\Phi,a} [u]({\boldsymbol{x}}) =\int_{{\mathbb{R}}^{\MOMSnsXi}}\int_{Y} {\mathrm{e}}^{i\Phi({\boldsymbol{x}},{\boldsymbol{y}},{\boldsymbol{\xi}})} a({\boldsymbol{x}},{\boldsymbol{y}},{\boldsymbol{\xi}}) u({\boldsymbol{y}}) \MOMSd {\boldsymbol{y}} \MOMSdbar {\boldsymbol{\xi}},
\end{align}
where $\MOMSdbar {\boldsymbol{\xi}} = (2\pi)^{-n}\MOMSd {\boldsymbol{\xi}}$. Let $\Phi:X\times Y\times {\mathbb{R}}^{\MOMSnsXi}\to {\mathbb{R}}$ be a \emph{phase function} on $X\times Y\times {\mathbb{R}}^{\MOMSnsXi}$ and $a \in S ^{d}_{\varrho,\delta}(X\times Y \times {\mathbb{R}}^{\MOMSnsXi})$ an \emph{amplitude function}. Furthermore, the following conditions for ${\boldsymbol{\xi}}\neq0$ are assumed:
\begin{align}\label{eqn:operator_phase_function2}
[\partial_{x_1} \Phi({\boldsymbol{x}},{\boldsymbol{y}},{\boldsymbol{\xi}}),\ldots,\partial_{x_{\MOMSnsX}}  \Phi({\boldsymbol{x}},{\boldsymbol{y}},{\boldsymbol{\xi}}),\partial_{\xi_1} \Phi({\boldsymbol{x}},{\boldsymbol{y}},{\boldsymbol{\xi}}),\ldots,\partial_{\xi_{\MOMSnsXi}}  \Phi({\boldsymbol{x}},{\boldsymbol{y}},{\boldsymbol{\xi}})]^{{\mathsf{T}}} &\neq \boldsymbol{0} , \\ \label{eqn:operator_phase_function1}
[\partial_{y_1}  \Phi({\boldsymbol{x}},{\boldsymbol{y}},{\boldsymbol{\xi}}),\ldots,\partial_{y_{\MOMSnsY}}  \Phi({\boldsymbol{x}},{\boldsymbol{y}},{\boldsymbol{\xi}}),\partial_{\xi_1}  \Phi({\boldsymbol{x}},{\boldsymbol{y}},{\boldsymbol{\xi}}),\ldots,\partial_{\xi_{\MOMSnsXi}}  \Phi({\boldsymbol{x}},{\boldsymbol{y}},{\boldsymbol{\xi}})]^{{\mathsf{T}}} &\neq \boldsymbol{0},
\end{align}
The function $\Phi$ is then called an \emph{operator phase function}.

By classical theory, if condition \eqref{eqn:operator_phase_function1} is satisfied, operator \eqref{eqn:FIO_classic} continuously maps ${\mathcal{D}}(Y)$ into ${\mathcal{C}}^\infty(X)$. Under condition \eqref{eqn:operator_phase_function2} operator \eqref{eqn:FIO_classic} can be extended to a continuous map from ${\mathcal{E}}'(Y)$ into ${\mathcal{D}}'(X)$ by
\[ \MOMSsklammer{A[u],\phi}=\MOMSsklammer{u,\MOMStranspop{A}[\phi]}, \]
where
\[ \MOMStranspop{A}_{\Phi,a} [\phi]({\boldsymbol{y}}) = \int_{{\mathbb{R}}^{\MOMSnsXi}} \int_{X} {\mathrm{e}}^{i\Phi({\boldsymbol{x}},{\boldsymbol{y}},{\boldsymbol{\xi}})} \ a({\boldsymbol{x}},{\boldsymbol{y}},{\boldsymbol{\xi}}) \phi({\boldsymbol{x}}) \MOMSd {\boldsymbol{x}} \MOMSdbar {\boldsymbol{\xi}}.\]

\section{Stochastic Fourier integral operators} \label{subesc:sfios}

The set of operator phase functions and the space of amplitudes are equipped with a natural metrizable topology, which we now describe. The first task in this section will be to prove the continuity of the maps \eqref{eq:cont}. In order to derive the required inequalities, it will be necessary to replace the conditions on nondegeneracy \eqref{eqn:operator_phase_function2}, \eqref{eqn:operator_phase_function1} by strict bounds away from zero. It turns out that this is not a serious restriction in the applications. The resulting  space is a closed subset of the space of operator phase functions.

To avoid technical difficulties we hereafter exclude $\boldsymbol{0}$ from ${\mathbb{R}}^{\MOMSnsXi}$ and set $\Xi={\mathbb{R}}^{\MOMSnsXi}\backslash\MOMSmenge{\boldsymbol{0}}$.
The sets $X$ and $Y$ are open subsets of ${\mathbb{R}}^{\MOMSnsX}$ and ${\mathbb{R}}^{\MOMSnsY}$, respectively.

If $Z$ is an open subset of ${\mathbb{R}}^d$, $d\in\{\MOMSnsX,\MOMSnsY\}$, we define a compact exhaustion $K_{Z,m}$ of $Z$ by
\[ K_{Z,m}=\{{\boldsymbol{x}}\in Z, \MOMSnorm{{\boldsymbol{x}}}\leq m, \ \dist({\boldsymbol{x}},{\mathbb{R}}^{d}\backslash Z)\geq  1/m\}.\]
\begin{definition}
    The space of \emph{positively homogeneous functions of degree one} is defined by
    \begin{align*}
	{\mathcal{M}}_{{hg}}(X,Y,\Xi)=\Big\{&\Phi:X\times Y \times \Xi \to {\mathbb{R}}, \Phi \text{ smooth and }\\ &  \text{ positively homogeneous of degree $1$ w.r.t. ${\boldsymbol{\xi}}$} \Big\}.
	\end{align*}
    Let $\alpha > 0$. The subspace ${\mathcal{M}}_\alpha(X,Y,\Xi)$ is given by
    \begin{align*}
	{\mathcal{M}}_\alpha(X,Y,\Xi)=\Bigg\{&\Phi\in {\mathcal{M}}_{{hg}}(X,Y,\Xi) \text{ such that } \forall {\boldsymbol{x}} \in X, {\boldsymbol{y}} \in Y, {\boldsymbol{\xi}} \in \Xi: \\
	&\MOMSnorm{\begin{pmatrix}
		\MOMSnorm{{\boldsymbol{\xi}}}^{-1} \nabla_{{\boldsymbol{x}}}\\\nabla_{{\boldsymbol{\xi}}}
		\end{pmatrix}\Phi({\boldsymbol{x}},{\boldsymbol{y}},{\boldsymbol{\xi}})}^2\geq\alpha ,  \MOMSnorm{\begin{pmatrix}
		\MOMSnorm{{\boldsymbol{\xi}}}^{-1} \nabla_{{\boldsymbol{y}}} \\\nabla_{{\boldsymbol{\xi}}}
		\end{pmatrix} \Phi({\boldsymbol{x}},{\boldsymbol{y}},{\boldsymbol{\xi}})}^2\geq \alpha   \  \Bigg\} .
	\end{align*}
\end{definition}
	Let ${\boldsymbol{j}}\in{\mathbb{N}}^{\MOMSnsX},{\boldsymbol{k}}\in{\mathbb{N}}^{\MOMSnsY},{\boldsymbol{l}}\in{\mathbb{N}}^{\MOMSnsXi}$ be multi-indices and let  $(p_m)_{m\in\mathbb{N}}$ be the sequence of seminorms with
	\begin{align} \label{eqn:seminorms_phase}
	\begin{aligned}
	&p_m (\Phi) :=\sup\bigg\{ \MOMSnorm{\boldsymbol{\xi}}^{\boldsymbol{l}-1} \MOMSabs{\partial_{{\boldsymbol{x}}}^{{\boldsymbol{j}}}\partial_{{\boldsymbol{y}}}^{{\boldsymbol{k}}}\partial_{{\boldsymbol{\xi}}}^{{\boldsymbol{l}}} \Phi\left({\boldsymbol{x}},{\boldsymbol{y}},{{\boldsymbol{\xi}}} \right) }, {\boldsymbol{x}}\in K_{X,m}, {\boldsymbol{y}}\in K_{Y,m} ,\\
	&\qquad\qquad\qquad\qquad\qquad\qquad\qquad\qquad\qquad {\boldsymbol{\xi}}\in\Xi, \MOMSabs{{\boldsymbol{j}}}+\MOMSabs {\boldsymbol{k}}+\MOMSabs {\boldsymbol{l}}\leq m\bigg\}.
	\end{aligned}
	\end{align}
The topology of ${\mathcal{M}}_{{hg}}(X,Y,\Xi)$ is defined by the seminorms $(p_m)_{m\in\mathbb{N}}$.

\begin{remark}
	Since $\Phi$ is positively homogeneous of degree one with respect to $\boldsymbol{\xi}$, an equivalent sequence of seminorms is given by
	\begin{align}
	\begin{aligned}
	&\widetilde{p}_m (\Phi) =\sup\bigg\{ \big|{\partial_{{\boldsymbol{x}}}^{{\boldsymbol{j}}}\partial_{{\boldsymbol{y}}}^{{\boldsymbol{k}}}\partial_{{\boldsymbol{\eta}}}^{{\boldsymbol{l}}} \Phi\left({\boldsymbol{x}},{\boldsymbol{y}},{{\boldsymbol{\eta}}} \right)\big| \Big|_{\boldsymbol{\eta}=\boldsymbol{\xi}/{\MOMSnorm{\boldsymbol{\xi}}}}}, {\boldsymbol{x}}\in K_{X,m}, {\boldsymbol{y}}\in K_{Y,m} ,\\
	&\qquad\qquad\qquad\qquad\qquad\qquad\qquad\qquad\qquad {\boldsymbol{\xi}}\in\Xi, \MOMSabs{{\boldsymbol{j}}}+\MOMSabs {\boldsymbol{k}}+\MOMSabs {\boldsymbol{l}}\leq m\bigg\}.
	\end{aligned}	
	\end{align}	%
\end{remark}

\begin{remark}
	Note that any function in ${\mathcal{M}}_\alpha$ automatically satisfies \eqref{eqn:operator_phase_function2} and \eqref{eqn:operator_phase_function1}. However, there are functions satisfying \eqref{eqn:operator_phase_function2} and \eqref{eqn:operator_phase_function1}, but there is no $\alpha>0$ such that they are in ${\mathcal{M}}_\alpha$. For $X=Y={\mathbb{R}}$ consider
	\[ \Phi(x,y,\xi)=({\mathrm{e}}^{-x^2}-y)\ \xi. \]
	Then $\partial_\xi \Phi(x,y,\xi)= {\mathrm{e}}^{-x^2}$ and $(\MOMSabs{\xi}^{-1 }\partial_x \Phi(x,y,\xi))= -2{\sign{(\xi)}} x {\mathrm{e}}^{-x^2} ,$ which can be arbitrarily small for $\MOMSabs{x}$ large.
\end{remark}

Now let $\partial B=\MOMSmenge{{\boldsymbol{\xi}}\in\Xi,\MOMSnorm{{\boldsymbol{\xi}}}=1}$ be the unit sphere in ${\mathbb{R}}^{\MOMSnsXi}$. The space ${\mathcal{C}}^\infty(X\times Y\times \partial B)$ with its usual topology $\mathcal{T}$ is a separable Fr\'echet space and thus metrizable and complete \cite[Chapter XVII, Section 2]{Dieudonne1997}. We have the following result
\begin{proposition}
	The space $\big({\mathcal{M}}_{{hg}}(X,Y,\Xi),(p_m)_{m\in{\mathbb{N}}}\big)$ is isomorphic to the space $\big({\mathcal{C}}^\infty(X\times Y\times \partial B), \mathcal{T} \big)$ and hence
separable, metrizable, and complete.
\end{proposition}
	\begin{proof}
	{The isomorphism is explicitly given by
	\begin{align*}
	I: {\mathcal{M}}_{{hg}}(X,Y,\Xi) &\to {\mathcal{C}}^\infty(X\times Y\times \partial B)\\
		\Big (({\boldsymbol{x}},{\boldsymbol{y}},{\boldsymbol{\xi}}) \mapsto f({\boldsymbol{x}},{\boldsymbol{y}},{\boldsymbol{\xi}}) \Big)&\mapsto \bigg(({\boldsymbol{x}},{\boldsymbol{y}},{\boldsymbol{\xi}}) \mapsto f\bigg({\boldsymbol{x}},{\boldsymbol{y}},\frac{{\boldsymbol{\xi}}}{\MOMSnorm{{\boldsymbol{\xi}}}}\bigg) \bigg).
	\end{align*}}

\noindent
The bicontinuity of $I$ can be most easily seen by employing local spherical coordinates on $\partial B$.
\end{proof}
\begin{proposition}
	Let $\alpha>0$, then ${\mathcal{M}}_{\alpha}(X,Y,\Xi)$ is a closed subset of ${\mathcal{M}}_{{hg}}(X,Y,\Xi)$.
\end{proposition}
	\begin{proof}
		We show that \[ \MOMSnorm{\left(\begin{matrix}
			\MOMSnorm{{\boldsymbol{\xi}}}^{-1} \nabla_{{\boldsymbol{x}}}\\ \nabla_{{\boldsymbol{\xi}}}
			\end{matrix}\right) (\cdot)}^2  \] continuously maps ${\mathcal{M}}_\alpha(X,Y,\Xi)$ to ${\mathbb{R}}$: Fix $({\boldsymbol{x}},{\boldsymbol{y}},{\boldsymbol{\xi}})\in(X\times Y\times \Xi)$, and choose $m\geq1$ large enough, such that ${\boldsymbol{x}}\in K_{X,m}$ and ${\boldsymbol{y}} \in K_{Y,m}$. Then, by \eqref{eqn:seminorms_phase}
		\begin{align*}
		\MOMSabs{\MOMSnorm{{\boldsymbol{\xi}}}^{-1}\partial_{x_j} \Phi({\boldsymbol{x}},{\boldsymbol{y}},{\boldsymbol{\xi}})}\leq p_m(\Phi)
		\end{align*}
		and
		\begin{align*}
		\MOMSabs{\partial_{\xi_l} \Phi ({\boldsymbol{x}},{\boldsymbol{y}},{\boldsymbol{\xi}})}\leq p_{m} (\Phi).
		\end{align*}
		So in total\[ \MOMSnorm{\left(\begin{matrix}
			\MOMSnorm{{\boldsymbol{\xi}}}^{-1} \nabla_{{\boldsymbol{x}}}\\ \nabla_{{\boldsymbol{\xi}}}
			\end{matrix} \right)\Phi({\boldsymbol{x}},{\boldsymbol{y}},{\boldsymbol{\xi}})}^2\leq (\MOMSnsXi +\MOMSnsX) (p_{m}(\Phi))^2.  \]
		Since the $2$-norm is continuous as well, the limit of a convergent sequence $(\Phi_n)_{n\in{\mathbb{N}}}$ can be pulled out
		\[  \MOMSnorm{\begin{pmatrix}
			\MOMSnorm{{\boldsymbol{\xi}}}^{-1} \nabla_{{\boldsymbol{x}}}\\\nabla_{{\boldsymbol{\xi}}}
			\end{pmatrix}  \lim_{n\to\infty}\Phi_n({\boldsymbol{x}},{\boldsymbol{y}},{\boldsymbol{\xi}})}^2 =\lim_{n\to\infty}\MOMSnorm{\begin{pmatrix}
			\MOMSnorm{{\boldsymbol{\xi}}}^{-1} \nabla_{{\boldsymbol{x}}}\\\nabla_{{\boldsymbol{\xi}}}
			\end{pmatrix}  \Phi_n({\boldsymbol{x}},{\boldsymbol{y}},{\boldsymbol{\xi}})}^2  \geq\alpha, \]
		and the continuity is shown. Since the preimage of a closed set is closed,
		\[ \MOMSmenge{\Phi\in {\mathcal{M}}_{{hg}}(X,Y,\Xi):  \MOMSnorm{\left(\begin{matrix}
				\MOMSnorm{{\boldsymbol{\xi}}}^{-1} \nabla_{{\boldsymbol{x}}}\\ \nabla_{{\boldsymbol{\xi}}}
				\end{matrix}\right) \Phi({\boldsymbol{x}},{\boldsymbol{y}},{\boldsymbol{\xi}})}^2\geq \alpha }  \]
		is closed.
		The proof works analogously for $ \MOMSnorm{\left(\begin{matrix}
			\MOMSnorm{{\boldsymbol{\xi}}}^{-1} \nabla_{{\boldsymbol{y}}}\\ \nabla_{{\boldsymbol{\xi}}}
			\end{matrix}\right) (\cdot)}^2 $, and the intersection of closed sets is closed again.
\end{proof}

\begin{definition}
	The space of amplitude functions on $X\times Y \times \Xi$ is defined by
	\[ {\mathcal{S}}_{\varrho,\delta}^{d}(X,Y,\Xi)=\MOMSmenge{a\big|_{X\times Y\times\Xi} ,\ a\in S_{\varrho,\delta}^d(X\times Y\times {\mathbb{R}}^{\MOMSnsXi})}. \]
	This space is equipped with the seminorms
	\begin{align*}
	q_m (a) =\sup\Big\{ \MOMSsklammer{{\boldsymbol{\xi}}}^{-d+\varrho\MOMSabs{{\boldsymbol{l}}}  -\delta  (\MOMSabs{{\boldsymbol{j}}}+\MOMSabs{{\boldsymbol{k}}})} &\MOMSabs{\partial_{{\boldsymbol{x}}}^{{\boldsymbol{j}}}\partial_{{\boldsymbol{y}}}^{{\boldsymbol{k}}}\partial_{{\boldsymbol{\xi}}}^{{\boldsymbol{l}}}\ a({\boldsymbol{x}},{\boldsymbol{y}},{\boldsymbol{\xi}})}, \text{ where } \\&{\boldsymbol{x}}\in K_{X,m}, {\boldsymbol{y}}\in K_{Y,m} , {\boldsymbol{\xi}} \in \Xi,\MOMSabs {{\boldsymbol{j}}}+\MOMSabs {{\boldsymbol{k}}} + \MOMSabs{{\boldsymbol{l}}}\leq m\big\}.
	\end{align*}
\end{definition}

Note that by \cite[Chapter VII, Section 7.8]{Hoermander2003} the space $({\mathcal{S}}_{\varrho,\delta}^d,(q_m)_{m\in{\mathbb{N}}})$ forms a Fr{\'e}chet space, and thus it is complete and metrizable. Furthermore, one can check that it is separable. Actually, ${\mathcal{S}}_{\varrho,\delta}^{d}(X,Y,\Xi)$ is isomorphic with
$S_{\varrho,\delta}^d(X\times Y\times {\mathbb{R}}^{\MOMSnsXi})$.

In any case, both the phase function space and the amplitude function space are closed subsets of separable, metrizable, complete spaces. For any further consideration we will deal with
\[ {\mathcal{F}}_{\alpha,\varrho,\delta}^d(X,Y,\Xi)={\mathcal{M}}_\alpha(X,Y,\Xi)\times {\mathcal{S}}_{\varrho,\delta}^d(X,Y,\Xi),\]
which is equipped with the product topology of the two spaces, induced by the seminorms $\boldsymbol{p_m}(\Phi,a):=p_m(\Phi)+q_m(a), \ m\in\mathbb N$.
We call ${\mathcal{F}}_{\alpha,\varrho,\delta}^d(X,Y,\Xi)$ the space of \emph{FIO operator functions}.

\begin{lemma}		
	Let $\Phi\in {\mathcal{M}}_\alpha(X,Y,\Xi)$ and let ${\underline{\alpha}}_i$ resp. ${\underline{\beta}}_i$ be the coefficients of the regularizing operator $L$ (see \eqref{eq:coeffL}). Then for $m\in{\mathbb{N}}$ there exists a polynomial ${\mathcal{P}}_m$ such that for any ${\boldsymbol{x}}\in K_{X,m},{\boldsymbol{y}}\in K_{Y,m}$ and ${\boldsymbol{\xi}}\in \Xi$
	\begin{align} \label{eqn:estimate_ak_fios}
	\MOMSabs{\partial_{{\boldsymbol{x}}}^{{\boldsymbol{j}}}\partial_{{\boldsymbol{y}}}^{{\boldsymbol{k}}}\partial_{{\boldsymbol{\xi}}}^{{\boldsymbol{l}}}\ {\underline{\alpha}}_i ({\boldsymbol{x}},{\boldsymbol{y}},{\boldsymbol{\xi}})}\leq{\mathcal{P}}_m(p_m(\Phi))  \MOMSsklammer{{\boldsymbol{\xi}}}^{-\MOMSabs{{\boldsymbol{l}}}}
	\end{align}
	for $i\in\MOMSmenge{1,\ldots,\MOMSnsXi}$ and
	\begin{align} \label{eqn:estimate_bj_fios}
	\MOMSabs{\partial_{{\boldsymbol{x}}}^{{\boldsymbol{j}}}\partial_{{\boldsymbol{y}}}^{{\boldsymbol{k}}} \partial_{{\boldsymbol{\xi}}}^{{\boldsymbol{l}}}\ {\underline{\beta}}_i ({\boldsymbol{x}},{\boldsymbol{y}},{\boldsymbol{\xi}})}\leq {\mathcal{P}}_m(p_m(\Phi)) \MOMSsklammer{{\boldsymbol{\xi}}}^{-1-\MOMSabs{{\boldsymbol{l}}}}
	\end{align}
	for $i\in\MOMSmenge{1,\ldots, \MOMSnsY}$ and all $\MOMSabs{{\boldsymbol{j}}}+\MOMSabs{{\boldsymbol{k}}}+\MOMSabs{{\boldsymbol{l}}}+1 \leq m$.
	\begin{proof}
		For this proof we will show the inequalities only for the zeroth and first derivative of \eqref{eqn:estimate_ak_fios}. Any higher derivative can be treated the same way.
		
		Recall the notation from Equation \eqref{eqn:the_equation_of_r}: $$r({\boldsymbol{x}},{\boldsymbol{y}},{\boldsymbol{\xi}})={\MOMSnorm{\MOMSnorm{{\boldsymbol{\xi}}} \nabla_{{\boldsymbol{\xi}}} \Phi({\boldsymbol{x}},{\boldsymbol{y}},{\boldsymbol{\xi}})}^2 +\MOMSnorm{\nabla_{{\boldsymbol{y}}} \Phi({\boldsymbol{x}},{\boldsymbol{y}},{\boldsymbol{\xi}})}^2}.$$
		Since $\Phi\in{\mathcal{M}}_\alpha(X,Y,\Xi)$ one has that
		\[ \alpha \MOMSnorm{{\boldsymbol{\xi}}}^2 \leq r({\boldsymbol{x}},{\boldsymbol{y}},{\boldsymbol{\xi}}), \]
		and therefore,
		\begin{align*}
		\MOMSabs{{\underline{\alpha}}_i({\boldsymbol{x}},{\boldsymbol{y}},{\boldsymbol{\xi}})}&=\MOMSabs{ \frac{(1-\chi({\boldsymbol{\xi}})) \MOMSnorm{{\boldsymbol{\xi}}}^2 \partial_{\xi_i}\Phi({\boldsymbol{x}},{\boldsymbol{y}},{\boldsymbol{\xi}})}{r({\boldsymbol{x}},{\boldsymbol{y}},{\boldsymbol{\xi}})}} \\&\leq \MOMSabs{  \frac{1-\chi({\boldsymbol{\xi}})}{\alpha} \ \partial_{\xi_i} \Phi({\boldsymbol{x}},{\boldsymbol{y}},{\boldsymbol{\xi}})}
		\\&\leq  {C_m^0 p_m(\Phi)} {=:{\mathcal{P}}_m^0(p_m(\Phi))}
		\end{align*}
		
		We note that for all multi-indices ${\boldsymbol{j}},{\boldsymbol{k}}$ and ${\boldsymbol{l}}$ with $\MOMSabs{{\boldsymbol{j}}}+\MOMSabs{{\boldsymbol{k}}}+\MOMSabs{{\boldsymbol{l}}}+1 \leq m$ there exists a constant $K_m$ such that
		\[
		\MOMSabs{\partial_{{\boldsymbol{x}}}^{{\boldsymbol{j}}}\partial_{{\boldsymbol{y}}}^{{\boldsymbol{k}}}\partial_{{\boldsymbol{\xi}}}^{{\boldsymbol{l}}}\ r({\boldsymbol{x}},{\boldsymbol{y}},{\boldsymbol{\xi}})}
		\leq K_m \MOMSnorm{{\boldsymbol{\xi}}}^{2-\MOMSabs{{\boldsymbol{l}}}} (p_m(\Phi))^2\leq K_m \MOMSsklammer{{\boldsymbol{\xi}}}^{2-\MOMSabs{{\boldsymbol{l}}}} (p_m(\Phi))^2,
		\] for ${\boldsymbol{\xi}}\in\Xi,\MOMSnorm{{\boldsymbol{\xi}}}\geq1.$ Therefore,
		\begin{align*}
		&\MOMSabs{\partial_{y_k} {\underline{\alpha}}_i({\boldsymbol{x}},{\boldsymbol{y}},{\boldsymbol{\xi}})}\\
		=\ &\MOMSabs{\partial_{y_k} \frac{(1-\chi({\boldsymbol{\xi}})) \MOMSnorm{{\boldsymbol{\xi}}}^2 \partial_{\xi_i}\Phi({\boldsymbol{x}},{\boldsymbol{y}},{\boldsymbol{\xi}})}{r({\boldsymbol{x}},{\boldsymbol{y}},{\boldsymbol{\xi}})}}\\
		\leq\ &  \MOMSabs{(1-\chi({\boldsymbol{\xi}}))\partial_{y_k} \frac{\MOMSnorm{{\boldsymbol{\xi}}}^2 \partial_{\xi_i}\Phi({\boldsymbol{x}},{\boldsymbol{y}},{\boldsymbol{\xi}})}{r({\boldsymbol{x}},{\boldsymbol{y}},{\boldsymbol{\xi}})}}\\
		\leq\ & \MOMSabs{ (1-\chi({\boldsymbol{\xi}}))\frac{\MOMSnorm{{\boldsymbol{\xi}}}^2 \partial_{\xi_i, {y_k}}\Phi({\boldsymbol{x}},{\boldsymbol{y}},{\boldsymbol{\xi}}) r({\boldsymbol{x}},{\boldsymbol{y}},{\boldsymbol{\xi}})- \MOMSnorm{{\boldsymbol{\xi}}}^2 \partial_{\xi_i }\Phi({\boldsymbol{x}},{\boldsymbol{y}},{\boldsymbol{\xi}}) \partial_{y_k}r({\boldsymbol{x}},{\boldsymbol{y}},{\boldsymbol{\xi}})}{r^2({\boldsymbol{x}},{\boldsymbol{y}},{\boldsymbol{\xi}})}}\\
		\leq\ &  \MOMSabs{ (1-\chi({\boldsymbol{\xi}}))\frac{ \partial_{\xi_i,{y_k}}\Phi({\boldsymbol{x}},{\boldsymbol{y}},{\boldsymbol{\xi}}) r({\boldsymbol{x}},{\boldsymbol{y}},{\boldsymbol{\xi}})}{\alpha^2 \MOMSnorm{{\boldsymbol{\xi}}}^2}}+  \MOMSabs{ (1-\chi({\boldsymbol{\xi}}))\frac{\partial_{\xi_i }\Phi({\boldsymbol{x}},{\boldsymbol{y}},{\boldsymbol{\xi}}) \partial_{y_k}r({\boldsymbol{x}},{\boldsymbol{y}},{\boldsymbol{\xi}})}{\alpha^2 \MOMSnorm{{\boldsymbol{\xi}}}^2}}\\
		\leq\ & {C_m^1 (p_m(\Phi))^3}=:{{\mathcal{P}}_m^1(p_m(\Phi))}
		\end{align*}
		where the last inequality is due to the fact that $\partial_{\xi_i }\partial_{y_k}\Phi({\boldsymbol{x}},{\boldsymbol{y}},{\boldsymbol{\xi}})$ and $\partial_{\xi_i}\Phi({\boldsymbol{x}},{\boldsymbol{y}},{\boldsymbol{\xi}})$ are bounded by a constant times $p_m(\Phi)$ applying \eqref{eqn:seminorms_phase}. Since $(1-\chi({\boldsymbol{\xi}}))$ is nonzero only for $\MOMSnorm{{\boldsymbol{\xi}}}>1$, one has no difficulties in the neighborhood of $\boldsymbol 0$.
		
		Derivation with respect to $\xi_l$ yields
		
		\[ \MOMSabs{\partial_{\xi_l} {\underline{\alpha}}_i({\boldsymbol{x}},{\boldsymbol{y}},{\boldsymbol{\xi}})}\\
		=\MOMSabs{\partial_{\xi_l} \frac{(1-\chi({\boldsymbol{\xi}})) \MOMSnorm{{\boldsymbol{\xi}}}^2 \partial_{\xi_i}\Phi({\boldsymbol{x}},{\boldsymbol{y}},{\boldsymbol{\xi}})}{r({\boldsymbol{x}},{\boldsymbol{y}},{\boldsymbol{\xi}})}}, \]
		which is less or equal to
		\begin{align*}
		\leq \ &\MOMSabs{ \partial_{\xi_l} \chi({\boldsymbol{\xi}}) \frac{\MOMSnorm{{\boldsymbol{\xi}}}^2 \partial_{\xi_i}\Phi({\boldsymbol{x}},{\boldsymbol{y}},{\boldsymbol{\xi}})}{r({\boldsymbol{x}},{\boldsymbol{y}},{\boldsymbol{\xi}})}}
		+ \MOMSabs{(1-\chi({\boldsymbol{\xi}})) \frac{ 2\xi_l \partial_{\xi_i} \Phi({\boldsymbol{x}},{\boldsymbol{y}},{\boldsymbol{\xi}}) }{r({\boldsymbol{x}},{\boldsymbol{y}},{\boldsymbol{\xi}})}}\\
		&+ \MOMSabs{(1-\chi({\boldsymbol{\xi}})) \frac{ \MOMSnorm{{\boldsymbol{\xi}}}^2 \partial_{\xi_i, {\xi_l}}\Phi({\boldsymbol{x}},{\boldsymbol{y}},{\boldsymbol{\xi}}) }{r^({\boldsymbol{x}},{\boldsymbol{y}},{\boldsymbol{\xi}})}} +  \MOMSabs{(1-\chi({\boldsymbol{\xi}}))\frac{ \MOMSnorm{{\boldsymbol{\xi}}}^2 \partial_{\xi_i }\Phi({\boldsymbol{x}},{\boldsymbol{y}},{\boldsymbol{\xi}}) \partial_{\xi_l}r({\boldsymbol{x}},{\boldsymbol{y}},{\boldsymbol{\xi}})}{r^2({\boldsymbol{x}},{\boldsymbol{y}},{\boldsymbol{\xi}})}}\\	
		\leq\ &\MOMSabs{ \partial_{\xi_l} \chi({\boldsymbol{\xi}}) \frac{\MOMSnorm{{\boldsymbol{\xi}}}^2 \partial_{\xi_i}\Phi({\boldsymbol{x}},{\boldsymbol{y}},{\boldsymbol{\xi}})}{\alpha \MOMSnorm{{\boldsymbol{\xi}}}^2}}
		+ \MOMSabs{(1-\chi({\boldsymbol{\xi}})) \frac{ 2\xi_l \partial_{\xi_i} \Phi({\boldsymbol{x}},{\boldsymbol{y}},{\boldsymbol{\xi}}) }{\alpha \MOMSnorm{{\boldsymbol{\xi}}}^2}}\\
		&+ \MOMSabs{(1-\chi({\boldsymbol{\xi}})) \frac{ \MOMSnorm{{\boldsymbol{\xi}}}^2 \partial_{\xi_i, {\xi_l}}\Phi({\boldsymbol{x}},{\boldsymbol{y}},{\boldsymbol{\xi}}) }{\alpha \MOMSnorm{{\boldsymbol{\xi}}}^2}} +  \MOMSabs{(1-\chi({\boldsymbol{\xi}}))\frac{ \MOMSnorm{{\boldsymbol{\xi}}}^2 \partial_{\xi_i }\Phi({\boldsymbol{x}},{\boldsymbol{y}},{\boldsymbol{\xi}}) \partial_{\xi_l}r({\boldsymbol{x}},{\boldsymbol{y}},{\boldsymbol{\xi}})}{\alpha^2 \MOMSnorm{{\boldsymbol{\xi}}}^4}}\\
		\leq \ & \big(C^2_m \ p_m(\Phi)+C^3_m  (p_m(\Phi))^3\big) \MOMSsklammer{\boldsymbol{\xi}}^{-1}=:{\mathcal{P}}_m^2(p_m(\Phi)) \ \MOMSsklammer{{\boldsymbol{\xi}}}^{-1} ,
		\end{align*}
		where we used that $\xi_l/\MOMSnorm{{\boldsymbol{\xi}}}\leq 1$ and the same arguments as before.

		Having done the estimation for all ${\boldsymbol{j}},{\boldsymbol{k}},{\boldsymbol{l}}$, in the end one can set
		\[{\mathcal{P}}_m=\sum_{i} {\mathcal{P}}_m^i,\]
		since all coefficients of ${\mathcal{P}}_m^i$ are nonnegative. To prove \eqref{eqn:estimate_bj_fios} one can use the same arguments.
	\end{proof}
\end{lemma}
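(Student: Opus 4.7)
The plan is to combine the pointwise lower bound $r({\boldsymbol{x}},{\boldsymbol{y}},{\boldsymbol{\xi}}) \geq \alpha \MOMSnorm{{\boldsymbol{\xi}}}^2$, which is immediate from the definition of ${\mathcal{M}}_\alpha$ after multiplying through by $\MOMSnorm{{\boldsymbol{\xi}}}^2$, with the homogeneity-driven derivative bounds for $\Phi$ from Lemma~\ref{lemm:Phase_to_norm1}, and to propagate these through the quotients defining ${\underline{\alpha}}_i$ and ${\underline{\beta}}_k$ via Leibniz and Fa\`a di Bruno. Since the cutoff $1-\chi$ is supported in $\MOMSnorm{{\boldsymbol{\xi}}}\geq 1$, on the relevant region the weights $\MOMSnorm{{\boldsymbol{\xi}}}$ and $\MOMSsklammer{{\boldsymbol{\xi}}}$ are comparable and can be exchanged freely.

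First I would establish a symbol-type estimate for $r$ itself. Writing
\[ r = \sum_{l=1}^{\MOMSnsXi} \MOMSnorm{{\boldsymbol{\xi}}}^2 \MOMSabs{\partial_{\xi_l}\Phi}^2 + \sum_{k=1}^{\MOMSnsY} \MOMSabs{\partial_{y_k}\Phi}^2, \]
Leibniz together with Lemma~\ref{lemm:Phase_to_norm1} yields, for ${\boldsymbol{x}}\in K_{X,m}$, ${\boldsymbol{y}}\in K_{Y,m}$, ${\boldsymbol{\xi}}\in\Xi$ and $\MOMSabs{{\boldsymbol{j}}}+\MOMSabs{{\boldsymbol{k}}}+\MOMSabs{{\boldsymbol{l}}}\leq m-1$,
\[ \MOMSabs{\partial_{{\boldsymbol{x}}}^{{\boldsymbol{j}}}\partial_{{\boldsymbol{y}}}^{{\boldsymbol{k}}}\partial_{{\boldsymbol{\xi}}}^{{\boldsymbol{l}}} r({\boldsymbol{x}},{\boldsymbol{y}},{\boldsymbol{\xi}})} \leq K_m \MOMSnorm{{\boldsymbol{\xi}}}^{2-\MOMSabs{{\boldsymbol{l}}}}\,(p_m(\Phi))^2. \]
The central technical step is then to combine this with $r\geq\alpha\MOMSnorm{{\boldsymbol{\xi}}}^2$ and derive
\[ \MOMSabs{\partial_{{\boldsymbol{x}}}^{{\boldsymbol{j}}}\partial_{{\boldsymbol{y}}}^{{\boldsymbol{k}}}\partial_{{\boldsymbol{\xi}}}^{{\boldsymbol{l}}} (1/r)({\boldsymbol{x}},{\boldsymbol{y}},{\boldsymbol{\xi}})} \leq {\mathcal{Q}}_m(p_m(\Phi))\,\MOMSnorm{{\boldsymbol{\xi}}}^{-2-\MOMSabs{{\boldsymbol{l}}}} \]
for some polynomial ${\mathcal{Q}}_m$ whose coefficients depend on $\alpha$ and $m$. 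This follows from Fa\`a di Bruno, which expresses $\partial^{\mu}(1/r)$ as a finite sum of terms $r^{-1-s}\prod_{i=1}^{s}\partial^{\mu_i} r$ with $\sum_i\mu_i=\mu$; counting powers of $\MOMSnorm{{\boldsymbol{\xi}}}$ gives exactly $-2-\MOMSabs{{\boldsymbol{l}}}$ in each summand, while the degree in $p_m(\Phi)$ is bounded by $2s\leq 2\MOMSabs{\mu}$.

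With both ingredients in place, \eqref{eqn:estimate_ak_fios} follows by applying Leibniz to
\[ {\underline{\alpha}}_i = \frac{-i(1-\chi({\boldsymbol{\xi}}))\,\MOMSnorm{{\boldsymbol{\xi}}}^2\,\partial_{\xi_i}\Phi({\boldsymbol{x}},{\boldsymbol{y}},{\boldsymbol{\xi}})}{r({\boldsymbol{x}},{\boldsymbol{y}},{\boldsymbol{\xi}})}. \]
Derivatives falling on $1-\chi$ are supported in the compact annulus $1\leq\MOMSnorm{{\boldsymbol{\xi}}}\leq 2$ and are therefore uniformly bounded; derivatives falling on $\MOMSnorm{{\boldsymbol{\xi}}}^2\partial_{\xi_i}\Phi$ contribute a factor of order $\MOMSnorm{{\boldsymbol{\xi}}}^{2-\MOMSabs{{\boldsymbol{l}}'}}p_m(\Phi)$ by Lemma~\ref{lemm:Phase_to_norm1}; and derivatives falling on $1/r$ are controlled by the previous display. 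Multiplying the three contributions yields precisely the target weight $\MOMSsklammer{{\boldsymbol{\xi}}}^{-\MOMSabs{{\boldsymbol{l}}}}$, and summing over the finitely many Leibniz terms produces the polynomial ${\mathcal{P}}_m$. The proof of \eqref{eqn:estimate_bj_fios} is identical up to one detail: the numerator $\partial_{y_k}\Phi$ is homogeneous of degree $1$ rather than $2$ in ${\boldsymbol{\xi}}$, so every term inherits one extra power of $\MOMSnorm{{\boldsymbol{\xi}}}^{-1}$, giving the improved weight $\MOMSsklammer{{\boldsymbol{\xi}}}^{-1-\MOMSabs{{\boldsymbol{l}}}}$.

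The main obstacle is the bookkeeping for the $1/r$ bound: one must confirm that after collecting all Fa\`a di Bruno summands, the exponent of $\MOMSnorm{{\boldsymbol{\xi}}}$ is exactly $-2-\MOMSabs{{\boldsymbol{l}}}$ in every term and that the polynomial degree in $p_m(\Phi)$ stays finite and depends only on $m$. Both points reduce to the homogeneity identities $\deg_{{\boldsymbol{\xi}}} r = 2$ and $\deg_{{\boldsymbol{\xi}}}\partial_{{\boldsymbol{\xi}}}^{{\boldsymbol{l}}} r = 2-\MOMSabs{{\boldsymbol{l}}}$, which are handed to us by Lemma~\ref{lemm:Phase_to_norm1}; once these are in place the remaining manipulations are routine.
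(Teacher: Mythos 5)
Your proposal is correct and rests on exactly the same three ingredients as the paper's proof: the lower bound $r \geq \alpha\MOMSnorm{{\boldsymbol{\xi}}}^2$ coming from $\Phi\in{\mathcal{M}}_\alpha$, the homogeneity estimates of Lemma \ref{lemm:Phase_to_norm1}, and the symbol-type bound $\MOMSabs{\partial_{{\boldsymbol{x}}}^{{\boldsymbol{j}}}\partial_{{\boldsymbol{y}}}^{{\boldsymbol{k}}}\partial_{{\boldsymbol{\xi}}}^{{\boldsymbol{l}}} r}\leq K_m\MOMSnorm{{\boldsymbol{\xi}}}^{2-\MOMSabs{{\boldsymbol{l}}}}(p_m(\Phi))^2$. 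Where you differ is in the bookkeeping: the paper differentiates the quotient ${\underline{\alpha}}_i$ directly, writes out the quotient rule explicitly for the zeroth and first derivatives only, and asserts that higher orders go ``the same way,'' whereas you factor the problem into a standalone Fa\`a di Bruno estimate $\MOMSabs{\partial^{\mu}(1/r)}\leq {\mathcal{Q}}_m(p_m(\Phi))\MOMSnorm{{\boldsymbol{\xi}}}^{-2-\MOMSabs{{\boldsymbol{l}}}}$ followed by a single Leibniz expansion of $(1-\chi)\cdot\MOMSnorm{{\boldsymbol{\xi}}}^2\partial_{\xi_i}\Phi\cdot(1/r)$. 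Your power counting checks out ($r^{-1-s}$ contributes $\MOMSnorm{{\boldsymbol{\xi}}}^{-2-2s}$, the $s$ factors $\partial^{\mu_i}r$ contribute $\MOMSnorm{{\boldsymbol{\xi}}}^{2s-\MOMSabs{{\boldsymbol{l}}}}$, and the support of $1-\chi$ in $\MOMSnorm{{\boldsymbol{\xi}}}\geq 1$ lets you trade $\MOMSnorm{{\boldsymbol{\xi}}}$ for $\MOMSsklammer{{\boldsymbol{\xi}}}$), and your observation that $\partial_{y_k}\Phi$ is homogeneous of degree $1$ rather than $2$ cleanly explains the extra decay in \eqref{eqn:estimate_bj_fios}. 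The payoff of your organization is that it treats all derivative orders $\MOMSabs{{\boldsymbol{j}}}+\MOMSabs{{\boldsymbol{k}}}+\MOMSabs{{\boldsymbol{l}}}+1\leq m$ uniformly, i.e.\ it actually supplies the induction that the paper's proof leaves implicit; the cost is that the reader must trust the Fa\`a di Bruno combinatorics rather than see two concrete low-order computations.
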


\begin{definition}
	The seminorms $(\pi_{X,{m}})_{m\in{\mathbb{N}}}$ for ${\mathcal{C}}^\infty(X)$ are defined by
	\[ \pi_{X,{m}} (v) =\sup\MOMSmenge{ \MOMSabs{\partial_{{\boldsymbol{x}}}^{{\boldsymbol{j}}}\ v({\boldsymbol{x}})}, {\boldsymbol{x}}\in K_{X,{m}}, \MOMSabs{{\boldsymbol{j}}}\leq {m}}, \qquad v\in{\mathcal{C}}^\infty(X).\]	
\end{definition}

\begin{proposition}
	Let $(\Phi_n,a_n)_{n\in{\mathbb{N}}}$ be a convergent sequence in the product space $(
	{\mathcal{F}}_{\alpha,\varrho,\delta}^d(X,Y,\Xi),(\boldsymbol{p}_m)_{m\in{\mathbb{N}}})$ with limit $(\Phi,a)$. Furthermore, let $L_n$ be the regularizing operator (cf. Section \ref{sec:osc_int}) for $(\Phi_n,a_n)$. Let $\widetilde{m}\in{\mathbb{N}}$ and $\psi\in{\mathcal{D}}(Y)$ be fixed. Furthermore, let $\MOMSabs{{\boldsymbol{j}}}\leq \widetilde{m}$.
	Choose $\kappa\in{\mathbb{N}}$ large enough, such that
	\[ \MOMSabs{\partial_{{\boldsymbol{x}}}^{{\boldsymbol{j}}} \left({\mathrm{e}}^{i\Phi_n({\boldsymbol{x}},{\boldsymbol{y}},{\boldsymbol{\xi}})} L^\kappa_n(a_n({\boldsymbol{x}},{\boldsymbol{y}},{\boldsymbol{\xi}}) \psi({\boldsymbol{y}}))\right)}={\mathcal{O}}(\MOMSsklammer{{\boldsymbol{\xi}}}^{-(\MOMSnsXi+1)}). \]
	 Choose $m$ large enough such that $\kappa+\widetilde{m}+1\leq m$ and $\supp (\psi)\subset K_{Y,m}$. Then
	
	\begin{enumerate}
		\item[(a)] there exists a constant ${C}_{m,p_m(\Phi_n),q_m(a_n),\psi}$, depending on $m,p_m(\Phi_n),q_m(a_n)$ and $\psi$, such that
		\begin{align*}
		&\sup_{{\boldsymbol{x}}\in K_{X,\widetilde{m}}, \ {\boldsymbol{y}} \in Y} \MOMSabs{\partial_{{\boldsymbol{x}}}^{{\boldsymbol{j}}} \left({\mathrm{e}}^{i\Phi_n({\boldsymbol{x}},{\boldsymbol{y}},{\boldsymbol{\xi}})} L^\kappa_n(a_n({\boldsymbol{x}},{\boldsymbol{y}},{\boldsymbol{\xi}}) \psi({\boldsymbol{y}}))\right)}\\&\qquad \qquad \qquad \qquad \leq {C}_{m,p_m(\Phi_n),q_m(a_n),\psi} \MOMSsklammer{{\boldsymbol{\xi}}}^{-(\MOMSnsXi+1) }  .
		\end{align*}
		\item[(b)] the sequence of FIOs $A_{\Phi_n,a_n}$ converges in the following sense:
		\[ \lim_{n\to\infty} \pi_{X,\widetilde{m}} (A_{\Phi_n,a_n}[\psi] - A_{\Phi,a}[\psi] ) )=0, \]
		where
		\[ A_{\Phi_n,a_n}[\psi]({\boldsymbol{x}})=\int_{\Xi} \int_{Y} {\mathrm{e}}^{i\Phi_n({\boldsymbol{x}},{\boldsymbol{y}},{\boldsymbol{\xi}})} L^\kappa_n(a_n({\boldsymbol{x}},{\boldsymbol{y}},{\boldsymbol{\xi}}) \psi({\boldsymbol{y}}))  \MOMSd {\boldsymbol{y}}\MOMSdbar {\boldsymbol{\xi}}, \]
		and
		\[ A_{\Phi,a}[\psi]({\boldsymbol{x}})=\int_{\Xi} \int_{Y} {\mathrm{e}}^{i\Phi({\boldsymbol{x}},{\boldsymbol{y}},{\boldsymbol{\xi}})} L^\kappa(a({\boldsymbol{x}},{\boldsymbol{y}},{\boldsymbol{\xi}}) \psi({\boldsymbol{y}}))  \MOMSd {\boldsymbol{y}}\MOMSdbar {\boldsymbol{\xi}}. \]
	\end{enumerate} 	
	
	\begin{proof}
		(a) We will examine only $a_n \psi$ and $L_n(a_n \psi)$. The term $L^\kappa_n(a_n \psi)$ can be estimated in the same way, but it is much more tedious.
		
		Since $\psi$ has compact support, there exists a constant $C_{\psi,m}$, depending on $m$ and $\psi$ such that
		\[ \MOMSabs{\partial_{\boldsymbol{y}}^{{\boldsymbol{k}}} \psi({\boldsymbol{y}})} \leq C_{\psi,m},\]
		for any $\MOMSabs{{\boldsymbol{k}}}\leq  {m}$.
		By assumption, $\supp (\psi) \subset K_{Y,m}$ and therefore
		\begin{align*}
		\sup_{{\boldsymbol{x}}\in K_{X,\widetilde{m}},\ {\boldsymbol{y}} \in Y}\MOMSabs{a_n({\boldsymbol{x}},{\boldsymbol{y}},{\boldsymbol{\xi}})\psi({\boldsymbol{y}})} =\ & \sup_{{\boldsymbol{x}}\in K_{X,\widetilde{m}},\ {\boldsymbol{y}} \in K_{Y,m}}\MOMSabs{a_n({\boldsymbol{x}},{\boldsymbol{y}},{\boldsymbol{\xi}})\psi({\boldsymbol{y}})}\\ \leq \ & C_{\psi,m} \ q_m(a_n) \MOMSsklammer{{\boldsymbol{\xi}}}^d  \\\leq \ & {C}^0_{m,p_m(\Phi_n),q_m(a_n),\psi} \MOMSsklammer{{\boldsymbol{\xi}}}^{d} .
		\end{align*}
		Furthermore,
		\begin{align}
		&\ \ \ L_n(a_n({\boldsymbol{x}},{\boldsymbol{y}},{\boldsymbol{\xi}}) \psi({\boldsymbol{y}})) \nonumber\\
		&\begin{aligned}
		=&-\sum_{l=1}^{\MOMSnsXi}  \partial_{\xi_l}( {{\underline{\alpha}}}_{nl}({\boldsymbol{x}},{\boldsymbol{y}},{\boldsymbol{\xi}})  a_n({\boldsymbol{x}},{\boldsymbol{y}},{\boldsymbol{\xi}}) \psi({\boldsymbol{y}})) -\sum_{k=1}^{\MOMSnsY} \partial_{ y_k} ({{\underline{\beta}}}_{nk}({\boldsymbol{x}},{\boldsymbol{y}},{\boldsymbol{\xi}})  a_n({\boldsymbol{x}},{\boldsymbol{y}},{\boldsymbol{\xi}}) \psi({\boldsymbol{y}}))\\&+{{\underline{\gamma}}}_{n}({\boldsymbol{x}},{\boldsymbol{y}},{\boldsymbol{\xi}})  a_n({\boldsymbol{x}},{\boldsymbol{y}},{\boldsymbol{\xi}}) \psi({\boldsymbol{y}}). \label{eqn:thm:conv_fio}
		\end{aligned}
		\end{align}
		Using again that $\supp \psi\subset K_{y,m}$, the first term of \eqref{eqn:thm:conv_fio} can be bounded by
		\begin{align*}
		&\sup_{{\boldsymbol{x}}\in K_{X,\widetilde{m}}, \ {\boldsymbol{y}} \in Y}\MOMSabs{\partial_{ \xi_l} ({{\underline{\alpha}}}_{nl}({\boldsymbol{x}},{\boldsymbol{y}},{\boldsymbol{\xi}})  a_n({\boldsymbol{x}},{\boldsymbol{y}},{\boldsymbol{\xi}}) \psi({\boldsymbol{y}}))}\\
		\leq&\	\sup_{{\boldsymbol{x}}\in K_{X,\widetilde{m}}, \ {\boldsymbol{y}} \in K_{Y,m}}\MOMSabs{\partial_{ \xi_l}{{\underline{\alpha}}}_{nl}({\boldsymbol{x}},{\boldsymbol{y}},{\boldsymbol{\xi}})  a_n({\boldsymbol{x}},{\boldsymbol{y}},{\boldsymbol{\xi}}) \psi({\boldsymbol{y}})}\\
		& +	\sup_{{\boldsymbol{x}}\in K_{X,\widetilde{m}}, \ {\boldsymbol{y}} \in K_{Y,m}}\MOMSabs{{{\underline{\alpha}}}_{nl}({\boldsymbol{x}},{\boldsymbol{y}},{\boldsymbol{\xi}})  \partial_{ \xi_l} a_n({\boldsymbol{x}},{\boldsymbol{y}},{\boldsymbol{\xi}}) \psi({\boldsymbol{y}})}\\
		\leq&\  {\mathcal{P}}_m(p_m({\Phi_n})) \MOMSsklammer{{\boldsymbol{\xi}}} ^{-1}\MOMSabs{ a_n({\boldsymbol{x}},{\boldsymbol{y}},{\boldsymbol{\xi}})}\ C_{\psi,m}   \\
		& + { {\mathcal{P}}_m(p_m(\Phi_n))  \MOMSabs{\partial_{\xi_l}  a_n({\boldsymbol{x}},{\boldsymbol{y}},{\boldsymbol{\xi}})}  C_{\psi,m}  }\\
		\leq &\ C_{\psi,m}\ {\mathcal{P}}_m(p_m(\Phi_n)) \ q_m(a_n) \left( \MOMSsklammer{{\boldsymbol{\xi}}}^{d-1}+\MOMSsklammer{{\boldsymbol{\xi}}}^{d-\varrho} \right)  .
		\end{align*}
		The second term of \eqref{eqn:thm:conv_fio} can be bounded by
		\begin{align*}
		&	\sup_{{\boldsymbol{x}}\in K_{X,\widetilde{m}}, \ {\boldsymbol{y}} \in Y}\MOMSabs{\partial_{ y_k} ({{\underline{\beta}}}_{nk}({\boldsymbol{x}},{\boldsymbol{y}},{\boldsymbol{\xi}})  a_n({\boldsymbol{x}},{\boldsymbol{y}},{\boldsymbol{\xi}}) \psi({\boldsymbol{y}}))}\\
		\leq&\	\sup_{{\boldsymbol{x}}\in K_{X,\widetilde{m}}, \ {\boldsymbol{y}} \in K_{Y,m}}\MOMSabs{\partial_{ y_k}{{\underline{\beta}}}_{nk}({\boldsymbol{x}},{\boldsymbol{y}},{\boldsymbol{\xi}})  a_n({\boldsymbol{x}},{\boldsymbol{y}},{\boldsymbol{\xi}}) \psi({\boldsymbol{y}})}\\
		&+	\sup_{{\boldsymbol{x}}\in K_{X,\widetilde{m}}, \ {\boldsymbol{y}} \in K_{Y,m}}\MOMSabs{{{\underline{\beta}}}_{nk}({\boldsymbol{x}},{\boldsymbol{y}},{\boldsymbol{\xi}})  \partial_{ y_k} a_n({\boldsymbol{x}},{\boldsymbol{y}},{\boldsymbol{\xi}}) \psi({\boldsymbol{y}})}\\
		& +	\sup_{{\boldsymbol{x}}\in K_{X,\widetilde{m}}, \ {\boldsymbol{y}} \in K_{Y,m}}\MOMSabs{{{\underline{\beta}}}_{nk}({\boldsymbol{x}},{\boldsymbol{y}},{\boldsymbol{\xi}})  a_n({\boldsymbol{x}},{\boldsymbol{y}},{\boldsymbol{\xi}})\partial_{ y_k} \psi({\boldsymbol{y}})}\\
		\leq&\  {\mathcal{P}}(p_m({\Phi_n})) \MOMSsklammer{{\boldsymbol{\xi}}} ^{-1} \MOMSabs{a_n({\boldsymbol{x}},{\boldsymbol{y}},{\boldsymbol{\xi}})} \ C_{\psi,m}   \\
		& + { {\mathcal{P}}_m(p_m(\Phi_n)) \MOMSsklammer{{\boldsymbol{\xi}}} ^{-1} \partial_{y_k}  \MOMSabs{a_n({\boldsymbol{x}},{\boldsymbol{y}},{\boldsymbol{\xi}})} C_{\psi,m} }\\
		& + { {\mathcal{P}}_m(p_m(\Phi_n)) \MOMSsklammer{{\boldsymbol{\xi}}} ^{-1}  \MOMSabs{a_n({\boldsymbol{x}},{\boldsymbol{y}},{\boldsymbol{\xi}})} \ C_{\psi,m} }\\
		\leq &\  C_{\psi,m} \ {\mathcal{P}}_m(p_m(\Phi_n)) q_m(a_n) \left(\MOMSsklammer{{\boldsymbol{\xi}}}^{d-1}+\MOMSsklammer{{\boldsymbol{\xi}}}^{d-1+\delta}+ \MOMSsklammer{{\boldsymbol{\xi}}}^{d-1}\right) .
		\end{align*}
		Since ${{\underline{\gamma}}}_{n}({\boldsymbol{x}},{\boldsymbol{y}},{\boldsymbol{\xi}})\equiv\chi({\boldsymbol{\xi}})$, which is compactly supported, there exists a constant ${\widetilde{C}}_m$ such that
		\[ \sup_{{\boldsymbol{x}}\in X,{\boldsymbol{y}}\in Y ,{\boldsymbol{\xi}}\in\Xi} \MOMSabs{{{\underline{\gamma}}}_{n}({\boldsymbol{x}},{\boldsymbol{y}},{\boldsymbol{\xi}})  a_n({\boldsymbol{x}},{\boldsymbol{y}},{\boldsymbol{\xi}}) \psi({\boldsymbol{y}})}\leq  {\widetilde{C}}_m  C_{\psi,m}\ q_m(a_n) \MOMSsklammer{{\boldsymbol{\xi}}}^{-(\MOMSnsXi+1)} . \]
		So for
		\[ {C}^1_{m,p_m(\Phi_n),q_m(a_n),\psi}= C_{\psi,m} \big(\MOMSnsXi  + \MOMSnsY    +{\widetilde{C}}_m\big) \ q_m(a_n) \ \big(  {\mathcal{P}}_m(p_m(\Phi_n)) \vee 1\big) \]
		it holds that
		\[\sup_{{\boldsymbol{x}}\in K_{X,\widetilde{m}}} \MOMSabs{L_n(a_n({\boldsymbol{x}},{\boldsymbol{y}},{\boldsymbol{\xi}}) \psi({\boldsymbol{y}}))} \leq {C}^1_{m,p_m(\Phi_n),q_m(a_n),\psi} \MOMSsklammer{{\boldsymbol{\xi}}}^{(d-((1-\delta)\wedge \varrho)}. \]
		Iterative application of $L$ decreases the growth with respect to ${\boldsymbol{\xi}}$. For the $\kappa$th application of $L$ we get a constant ${C}^\kappa_{m,p_m(\Phi_n),q_m(a_n),\psi}$ such that
		\[\sup_{{\boldsymbol{x}}\in K_{X,\widetilde{m}}} \MOMSabs{L_n^\kappa(a_n({\boldsymbol{x}},{\boldsymbol{y}},{\boldsymbol{\xi}}) \psi({\boldsymbol{y}}))} \leq {C}^\kappa_{m,p_m(\Phi_n),q_m(a_n),\psi} \MOMSsklammer{{\boldsymbol{\xi}}}^{d-\kappa((1-\delta)\wedge \varrho)}. \]
		If $\kappa$ is large enough, $d-\kappa((1-\delta)\wedge \varrho)\leq-(\MOMSnsXi+1)$. In the end we set
$${C}_{m,p_m(\Phi_n),q_m(a_n),\psi}=\sum_{i=1}^\kappa {C}^i_{m,p_m(\Phi_n),q_m(a_n),\psi}.$$
		
		(b) To prove this, we would like to use the dominated convergence theorem. So first we note that if $(\Phi_n,a_n)_{n\in{\mathbb{N}}}$ converges in $({\mathcal{F}}_{\alpha,\varrho,\delta}^d(X,Y,\Xi),(\boldsymbol{p}_m)_{m\in{\mathbb{N}}})$, then $$ \sup_{x\in K} \partial_{{\boldsymbol{x}}}^{{\boldsymbol{j}}} \big( {\mathrm{e}}^{i\Phi_n({\boldsymbol{x}},{\boldsymbol{y}},{\boldsymbol{\xi}})} L_n^{\kappa} (a_n({\boldsymbol{x}},{\boldsymbol{y}},{\boldsymbol{\xi}}) \psi({\boldsymbol{y}}))\big)$$ converges pointwise for all  ${\boldsymbol{j}}\in{\mathbb{N}}^{\MOMSnsX}$, ${\boldsymbol{y}}\in Y$ and ${\boldsymbol{\xi}}\in\Xi$ and $K\subset X$ compact.
		
		Furthermore, since $(\Phi_n,a_n)_{n\in{\mathbb{N}}}$ is convergent,
		\[ C_{m,\text{sup}}=\sup_{n\in{\mathbb{N}}}\MOMSmenge{ C_{m,p_m(\Phi_n),q_m(a_n),\psi} } \]
		is finite.
		
		So, since $\psi$ is supported in $K_{Y,m}$ we see that

		\begin{align*}
		&\int_{\Xi} \int_{Y} \sup_{x\in K}\MOMSabs{\partial_{{\boldsymbol{x}}}^{{\boldsymbol{j}}} \left({\mathrm{e}}^{i\Phi_n({\boldsymbol{x}},{\boldsymbol{y}},{\boldsymbol{\xi}})} L^\kappa_n(a_n({\boldsymbol{x}},{\boldsymbol{y}},{\boldsymbol{\xi}}) \psi({\boldsymbol{y}}))\right)}  \MOMSd {\boldsymbol{y}}\MOMSdbar {\boldsymbol{\xi}} \\
		= \ &\int_{\Xi} \int_{K_{Y,m}} \sup_{x\in K} \MOMSabs{\partial_{{\boldsymbol{x}}}^{{\boldsymbol{j}}} \left({\mathrm{e}}^{i\Phi_n({\boldsymbol{x}},{\boldsymbol{y}},{\boldsymbol{\xi}})} L^\kappa_n(a_n({\boldsymbol{x}},{\boldsymbol{y}},{\boldsymbol{\xi}}) \psi({\boldsymbol{y}}))\right)}  \MOMSd {\boldsymbol{y}}\MOMSdbar {\boldsymbol{\xi}} \\
		\leq \ & \int_{\Xi} \int_{K_{Y,m}} C_{m,\text{sup}} \MOMSsklammer{{\boldsymbol{\xi}}}^{-(\MOMSnsXi+1)}  \MOMSd {\boldsymbol{y}}\MOMSdbar {\boldsymbol{\xi}},
		\end{align*}
		by which all requirements of the dominated convergence theorem are satisfied. So
		{\allowdisplaybreaks
		\begin{align*}
		& \lim_{n\to\infty}\pi_{X,\widetilde{m}} (A_{\Phi_n,a_n}[\psi] - A_{\Phi,a}[\psi] ) \\
		= \ & \lim_{n\to\infty} \sup_{{\boldsymbol{x}}\in K_{X,\widetilde{m}}} \bigg | \partial_{{\boldsymbol{x}}}^{{\boldsymbol{j}}} \int_{\Xi} \int_{Y}  {\mathrm{e}}^{i\Phi_n({\boldsymbol{x}},{\boldsymbol{y}},{\boldsymbol{\xi}})} L^\kappa_n(a_n({\boldsymbol{x}},{\boldsymbol{y}},{\boldsymbol{\xi}}) \psi({\boldsymbol{y}}))\\
		&\qquad\qquad\qquad\qquad \qquad \ -{\mathrm{e}}^{i\Phi({\boldsymbol{x}},{\boldsymbol{y}},{\boldsymbol{\xi}})} L^\kappa(a({\boldsymbol{x}},{\boldsymbol{y}},{\boldsymbol{\xi}}) \psi({\boldsymbol{y}}))  \MOMSd {\boldsymbol{y}}\MOMSdbar {\boldsymbol{\xi}} \bigg| \\
		\leq \  & \lim_{n\to\infty}  \int_{\Xi} \int_{Y} \sup_{{\boldsymbol{x}}\in K_{X,\widetilde{m}}} \bigg | \partial_{{\boldsymbol{x}}}^{{\boldsymbol{j}}}\Big( {\mathrm{e}}^{i\Phi_n({\boldsymbol{x}},{\boldsymbol{y}},{\boldsymbol{\xi}})} L^\kappa_n(a_n({\boldsymbol{x}},{\boldsymbol{y}},{\boldsymbol{\xi}}) \psi({\boldsymbol{y}}))\\
		&\qquad\qquad\qquad\qquad \qquad \ -{\mathrm{e}}^{i\Phi({\boldsymbol{x}},{\boldsymbol{y}},{\boldsymbol{\xi}})} L^\kappa(a({\boldsymbol{x}},{\boldsymbol{y}},{\boldsymbol{\xi}}) \psi({\boldsymbol{y}})) \Big)\bigg| \MOMSd {\boldsymbol{y}}\MOMSdbar {\boldsymbol{\xi}}  \\
		=  \ &  \int_{\Xi} \int_{Y} \lim_{n\to\infty}  \sup_{{\boldsymbol{x}}\in K_{X,\widetilde{m}}}  \bigg | \partial_{{\boldsymbol{x}}}^{{\boldsymbol{j}}} \Big( {\mathrm{e}}^{i\Phi_n({\boldsymbol{x}},{\boldsymbol{y}},{\boldsymbol{\xi}})} L^\kappa_n(a_n({\boldsymbol{x}},{\boldsymbol{y}},{\boldsymbol{\xi}}) \psi({\boldsymbol{y}}))\\
		&\qquad\qquad\qquad\qquad \qquad \ -{\mathrm{e}}^{i\Phi({\boldsymbol{x}},{\boldsymbol{y}},{\boldsymbol{\xi}})} L^\kappa(a({\boldsymbol{x}},{\boldsymbol{y}},{\boldsymbol{\xi}}) \psi({\boldsymbol{y}}))\Big) \bigg| \MOMSd {\boldsymbol{y}}\MOMSdbar {\boldsymbol{\xi}} \\
		= \ &0.
		\end{align*}
		}
	\end{proof}
\end{proposition}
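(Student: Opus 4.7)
The plan is to reduce both parts to the coefficient estimates for the regularizing operator $L_n$ that were established in the preceding lemma. Part (a) is a careful product-rule expansion combined with $\kappa$-fold iteration; part (b) then follows from the dominated convergence theorem, using the bound from (a), made uniform in $n$, as an integrable majorant.

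For (a), I would expand $L_n(a_n\psi)$ by the product rule into a finite sum of products of a derivative of one of ${\underline{\alpha}}_{nl},{\underline{\beta}}_{nk},{\underline{\gamma}}_n$ with a derivative of $a_n\psi$. The coefficient estimates $\MOMSabs{\partial_{{\boldsymbol{x}}}^{{\boldsymbol{j}}}\partial_{{\boldsymbol{y}}}^{{\boldsymbol{k}}}\partial_{{\boldsymbol{\xi}}}^{{\boldsymbol{l}}} {\underline{\alpha}}_{nl}} \leq {\mathcal{P}}_m(p_m(\Phi_n))\MOMSsklammer{{\boldsymbol{\xi}}}^{-\MOMSabs{{\boldsymbol{l}}}}$ and the analogous one for ${\underline{\beta}}_{nk}$, together with the symbol estimates $\MOMSabs{\partial_{{\boldsymbol{x}}}^{{\boldsymbol{j}}}\partial_{{\boldsymbol{y}}}^{{\boldsymbol{k}}}\partial_{{\boldsymbol{\xi}}}^{{\boldsymbol{l}}} a_n} \leq q_m(a_n)\MOMSsklammer{{\boldsymbol{\xi}}}^{d-\varrho\MOMSabs{{\boldsymbol{l}}}+\delta(\MOMSabs{{\boldsymbol{j}}}+\MOMSabs{{\boldsymbol{k}}})}$, show that one application of $L_n$ produces a bound whose ${\boldsymbol{\xi}}$-decay is improved by $(1-\delta)\wedge\varrho$, at the cost of multiplying the prefactor by a polynomial in $p_m(\Phi_n)$ and $q_m(a_n)$. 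Iterating $\kappa$ times yields $\MOMSsklammer{{\boldsymbol{\xi}}}^{d-\kappa((1-\delta)\wedge\varrho)}$ times such a polynomial prefactor and a constant $C_{\psi,m}$ absorbing the $\boldsymbol{y}$-derivatives of $\psi$ on its support. The outer $\partial_{{\boldsymbol{x}}}^{{\boldsymbol{j}}}$ applied to $e^{i\Phi_n}L_n^\kappa(a_n\psi)$ produces, via the chain rule on $e^{i\Phi_n}$, at most $\MOMSabs{{\boldsymbol{j}}}$ additional factors of $\partial_{{\boldsymbol{x}}}\Phi_n$, each homogeneous of degree one in ${\boldsymbol{\xi}}$ and bounded by $C_m p_m(\Phi_n)\MOMSnorm{{\boldsymbol{\xi}}}$ via Lemma \ref{lemm:Phase_to_norm1}; these are absorbed by enlarging $\kappa$, which the hypothesis $\MOMSabs{\partial_{{\boldsymbol{x}}}^{{\boldsymbol{j}}}(e^{i\Phi_n}L_n^\kappa(a_n\psi))}={\mathcal{O}}(\MOMSsklammer{{\boldsymbol{\xi}}}^{-(\MOMSnsXi+1)})$ permits.

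For (b), note that a convergent sequence in a metrizable space is bounded, so $\sup_n p_m(\Phi_n)$ and $\sup_n q_m(a_n)$ are finite. Consequently the constant of (a) can be replaced by a single $C_{m,\mathrm{sup}}$ independent of $n$. Since $\supp\psi\subset K_{Y,m}$, the integrand vanishes for ${\boldsymbol{y}}\notin K_{Y,m}$, and the function $({\boldsymbol{y}},{\boldsymbol{\xi}})\mapsto C_{m,\mathrm{sup}}\mathbf{1}_{K_{Y,m}}({\boldsymbol{y}})\MOMSsklammer{{\boldsymbol{\xi}}}^{-(\MOMSnsXi+1)}$ is an integrable majorant on $Y\times\Xi$ for every $n$. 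Pointwise convergence of the integrand is a consequence of the definitions of $(p_m)$ and $(q_m)$: convergence of $(\Phi_n,a_n)$ to $(\Phi,a)$ entails uniform convergence on compacta of all partial derivatives up to any fixed order, so in particular the finitely many quantities entering ${\underline{\alpha}}_{nl},{\underline{\beta}}_{nk},{\underline{\gamma}}_n$ and their derivatives, as well as $a_n$ and $e^{i\Phi_n}$, all converge pointwise. The dominated convergence theorem, applied after interchanging the supremum over ${\boldsymbol{x}}\in K_{X,\widetilde m}$ with the integral by means of the bound (a), yields the claim.

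The main obstacle is the bookkeeping for (a): verifying that each of the $\kappa$ iterations of $L_n$ genuinely improves the ${\boldsymbol{\xi}}$-decay by $(1-\delta)\wedge\varrho$ while keeping the prefactor polynomial in $p_m(\Phi_n)$ and linear in $q_m(a_n)$. Careful counting explains the hypothesis $\kappa+\widetilde m+1\leq m$: after $\kappa$ applications of $L_n$ and the outer $\partial_{{\boldsymbol{x}}}^{{\boldsymbol{j}}}$ with $\MOMSabs{{\boldsymbol{j}}}\leq\widetilde m$, the total number of derivatives falling on $a_n$ and $\Phi_n$ must fit inside the range controlled by the seminorms $p_m$ and $q_m$.
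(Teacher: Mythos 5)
Your proposal is correct and follows essentially the same route as the paper: part (a) by a product-rule expansion of $L_n(a_n\psi)$ controlled through the coefficient estimates \eqref{eqn:estimate_ak_fios}--\eqref{eqn:estimate_bj_fios} and the symbol seminorms $q_m(a_n)$, iterated $\kappa$ times to gain $\kappa((1-\delta)\wedge\varrho)$ powers of decay, and part (b) by dominated convergence with the $n$-uniform majorant obtained from boundedness of the convergent sequence. If anything, you are slightly more explicit than the paper in accounting for the factors of $\partial_{\boldsymbol{x}}\Phi_n$ produced when $\partial_{{\boldsymbol{x}}}^{{\boldsymbol{j}}}$ hits ${\mathrm{e}}^{i\Phi_n}$, which the paper subsumes under its remark that the full estimate is ``more tedious.''
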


\begin{corollary}
	Let $\psi\in{\mathcal{D}}(Y)$ fixed. The map
	\begin{align*}
	A_{\Phi,a}: {\mathcal{F}}_{\alpha,\varrho,\delta}^d(X,Y,\Xi) &\to {\mathcal{C}}^{\infty}(X)\\
	(\Phi,a)&\mapsto \int_\Xi\int_Y {\mathrm{e}}^{i\Phi({\boldsymbol{x}},{\boldsymbol{y}},{\boldsymbol{\xi}})} a({\boldsymbol{x}},{\boldsymbol{y}},{\boldsymbol{\xi}}) \psi({\boldsymbol{y}}) \MOMSd {\boldsymbol{y}} \MOMSdbar {\boldsymbol{\xi}}
	\end{align*}
	is continuous with respect to the topologies generated by $(\boldsymbol{p}_m)_{m\in{\mathbb{N}}}$ and $(\pi_{X,m})_{m\in{\mathbb{N}}}$.
\end{corollary}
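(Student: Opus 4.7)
The plan is to reduce this to part (b) of the preceding Proposition via a metrizability plus sequential continuity argument. First I would observe that both spaces are metrizable: the source ${\mathcal{F}}_{\alpha,\varrho,\delta}^d(X,Y,\Xi)$ is a closed subset of the product of two separable Fr\'echet spaces, so it is itself metrizable; and the target ${\mathcal{C}}^\infty(X)$ with the seminorms $(\pi_{X,m})_{m\in{\mathbb{N}}}$ is Fr\'echet. Hence continuity is equivalent to sequential continuity, and it suffices to show that whenever $(\Phi_n,a_n)\to(\Phi,a)$ in ${\mathcal{F}}_{\alpha,\varrho,\delta}^d$, one has $A_{\Phi_n,a_n}[\psi]\to A_{\Phi,a}[\psi]$ in ${\mathcal{C}}^\infty(X)$.

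Second, I would reconcile the two ways of writing the FIO. The Corollary presents it as the formal oscillatory integral $\int_\Xi\int_Y {\mathrm{e}}^{i\Phi}a\psi\,\MOMSd{\boldsymbol{y}}\MOMSdbar{\boldsymbol{\xi}}$, whereas the Proposition works with the regularized integral $\int_\Xi\int_Y {\mathrm{e}}^{i\Phi}L^\kappa(a\psi)\,\MOMSd{\boldsymbol{y}}\MOMSdbar{\boldsymbol{\xi}}$. By the definition of the oscillatory integral recalled in Section \ref{sec:osc_int}, these denote the same function on $X$ once $\kappa$ is chosen large enough, and the resulting function is independent of $\kappa$ beyond that threshold.

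Third, fix $\widetilde{m}\in{\mathbb{N}}$ and pick $\kappa$ and $m$ as in the hypotheses of the Proposition. The minimal admissible $\kappa$, determined by the requirement $d-\kappa((1-\delta)\wedge\varrho)\leq -(\MOMSnsXi+1)$, depends only on $d$, $\varrho$, $\delta$ and $\MOMSnsXi$, so one choice works along the entire sequence; $m$ can then be chosen to accommodate $\widetilde{m}$, $\kappa$ and $\supp(\psi)$. Part (b) of the Proposition then directly yields $\pi_{X,\widetilde{m}}(A_{\Phi_n,a_n}[\psi]-A_{\Phi,a}[\psi])\to 0$, and since $\widetilde{m}$ was arbitrary this is convergence in ${\mathcal{C}}^\infty(X)$. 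I do not anticipate any genuine obstacle, since the analytic heavy lifting --- the symbol-type bounds on the coefficients of $L_n$, the uniform pointwise domination from part (a), and the application of dominated convergence --- is already carried out in the Proposition.
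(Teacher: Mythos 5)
Your proposal is correct and follows exactly the route the paper intends: the Corollary is stated as an immediate consequence of part (b) of the preceding Proposition, and your write-up simply makes explicit the routine steps (metrizability of both spaces reducing continuity to sequential continuity, the $\kappa$-independence of the regularized oscillatory integral, and the uniform choice of $\kappa$ along the sequence) that the paper leaves implicit. No gaps.
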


\begin{remark}
	Using the same principles of the proof, one can also show that the map
	\begin{align*}
	{\mathcal{F}}_{\alpha,\varrho,\delta}^d(X,Y,\Xi)\times {\mathcal{D}}(Y) &\to {\mathcal{C}}^\infty(X)\\ (\Phi,a,\psi)&\mapsto A_{\Phi,a}[\psi]
	\end{align*}
	is continuous.
\end{remark}

\begin{corollary}
	Let $u\in{\mathcal{E}}'(Y)$ be a compactly supported distribution. Then
	\begin{align*}
	A_{\Phi,a}: {\mathcal{F}}_{\alpha,\varrho,\delta}^d(X,Y,\Xi) &\to {\mathcal{D}}'(X)\\
	(\Phi,a)&\mapsto A_{\Phi,a}[u]
	\end{align*}
	is continuous in the weak-$\ast$ topology. As in Section \ref{subsec:class_theory} the operator $A$ is defined by its adjoint
	\begin{align*}
	\MOMSsklammer{A_{\Phi,a}[u]({\boldsymbol{x}}),\psi({\boldsymbol{x}})}&=\MOMSsklammer{u({\boldsymbol{y}}),\MOMStranspop{A}_{\Phi,a}[\psi]({\boldsymbol{y}})}\\
	&=\MOMSsklammer{u({\boldsymbol{y}}),\int_\Xi\int_X {\mathrm{e}}^{i\Phi({\boldsymbol{x}},{\boldsymbol{y}},{\boldsymbol{\xi}})} a({\boldsymbol{x}},{\boldsymbol{y}},{\boldsymbol{\xi}}) \psi({\boldsymbol{x}}) \MOMSd {\boldsymbol{x}} \MOMSdbar {\boldsymbol{\xi}}}.
	\end{align*}
	
\end{corollary}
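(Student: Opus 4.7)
The plan is to reduce the weak-$\ast$ continuity of $(\Phi,a)\mapsto A_{\Phi,a}[u]$ to the previous corollary, applied to the transpose operator. By definition of the weak-$\ast$ topology on ${\mathcal{D}}'(X)$, it suffices to show that for every test function $\psi\in{\mathcal{D}}(X)$ the scalar map
\[
(\Phi,a)\ \mapsto\ \MOMSsklammer{A_{\Phi,a}[u],\psi}
     = \MOMSsklammer{u,\MOMStranspop{A}_{\Phi,a}[\psi]}
\]
is continuous on ${\mathcal{F}}_{\alpha,\varrho,\delta}^d(X,Y,\Xi)$, where the right-hand pairing makes sense because $\MOMStranspop{A}_{\Phi,a}[\psi]\in{\mathcal{C}}^\infty(Y)$ and $u\in{\mathcal{E}}'(Y)$ acts continuously on ${\mathcal{C}}^\infty(Y)$.

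Next I would observe that $\MOMStranspop{A}_{\Phi,a}$ is itself a Fourier integral operator of exactly the same type as $A_{\Phi,a}$, with the roles of $X$ and $Y$ interchanged: its phase function $\widetilde{\Phi}({\boldsymbol{y}},{\boldsymbol{x}},{\boldsymbol{\xi}})=\Phi({\boldsymbol{x}},{\boldsymbol{y}},{\boldsymbol{\xi}})$ is still smooth and positively homogeneous of degree one in ${\boldsymbol{\xi}}$, and the amplitude $\widetilde{a}({\boldsymbol{y}},{\boldsymbol{x}},{\boldsymbol{\xi}})=a({\boldsymbol{x}},{\boldsymbol{y}},{\boldsymbol{\xi}})$ belongs to ${\mathcal{S}}_{\varrho,\delta}^d(Y,X,\Xi)$. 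Crucially, the definition of ${\mathcal{M}}_\alpha(X,Y,\Xi)$ imposes the bound away from zero for \emph{both} blocks $(\MOMSnorm{{\boldsymbol{\xi}}}^{-1}\nabla_{{\boldsymbol{x}}},\nabla_{{\boldsymbol{\xi}}})\Phi$ and $(\MOMSnorm{{\boldsymbol{\xi}}}^{-1}\nabla_{{\boldsymbol{y}}},\nabla_{{\boldsymbol{\xi}}})\Phi$, so swapping $X$ and $Y$ keeps $\widetilde\Phi$ in ${\mathcal{M}}_\alpha(Y,X,\Xi)$ with the \emph{same} constant $\alpha$. The seminorms $p_m$ and $q_m$ are manifestly symmetric in $X$ and $Y$, so if $(\Phi_n,a_n)\to(\Phi,a)$ in ${\mathcal{F}}_{\alpha,\varrho,\delta}^d(X,Y,\Xi)$, then $(\widetilde\Phi_n,\widetilde a_n)\to(\widetilde\Phi,\widetilde a)$ in ${\mathcal{F}}_{\alpha,\varrho,\delta}^d(Y,X,\Xi)$. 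Applying the previous corollary to this transposed sequence with the fixed test function $\psi\in{\mathcal{D}}(X)$ gives
\[
\MOMStranspop{A}_{\Phi_n,a_n}[\psi]\ \longrightarrow\ \MOMStranspop{A}_{\Phi,a}[\psi]\qquad \text{in }{\mathcal{C}}^\infty(Y).
\]

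Finally, since $u\in{\mathcal{E}}'(Y)\subset({\mathcal{C}}^\infty(Y))'$ is continuous on ${\mathcal{C}}^\infty(Y)$, this convergence yields
\[
\MOMSsklammer{A_{\Phi_n,a_n}[u],\psi}
 = \MOMSsklammer{u,\MOMStranspop{A}_{\Phi_n,a_n}[\psi]}
 \ \longrightarrow\ \MOMSsklammer{u,\MOMStranspop{A}_{\Phi,a}[\psi]}
 = \MOMSsklammer{A_{\Phi,a}[u],\psi},
\]
which is the required weak-$\ast$ continuity. The only step needing real attention is the symmetry argument in the middle paragraph (verifying that the transposed FIO lies in the same parameter class so that the previous corollary is applicable); once that is in place, the rest is a mechanical composition of two continuities.
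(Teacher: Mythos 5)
Your proof is correct and follows exactly the route the paper intends (the paper leaves the argument implicit, merely recording the adjoint definition $\MOMSsklammer{A_{\Phi,a}[u],\psi}=\MOMSsklammer{u,\MOMStranspop{A}_{\Phi,a}[\psi]}$): reduce weak-$\ast$ continuity to the scalar pairings, note that $\MOMStranspop{A}_{\Phi,a}$ is an FIO of the same class with $X$ and $Y$ interchanged --- which works because the definition of ${\mathcal{M}}_\alpha$ and the seminorms $p_m$, $q_m$ are symmetric in ${\boldsymbol{x}}$ and ${\boldsymbol{y}}$ --- apply the preceding corollary, and compose with the continuity of $u$ on ${\mathcal{C}}^\infty(Y)$. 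Your explicit verification that the transposed phase stays in ${\mathcal{M}}_\alpha(Y,X,\Xi)$ with the same $\alpha$ is precisely the point the paper glosses over, and it is right.
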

\begin{theorem} [Stochastic Fourier integral operators] \label{thm:sFIO_measurable}
	Let $(\Omega,{\mathcal{F}},{\mathbb{P}})$ be a probability space and
	\[%
	\begin{aligned}
	a: \Omega&\to  S^d_{\varrho,\delta}(X\times Y \times \Xi) \\\omega&\mapsto \big( {\boldsymbol{x}},{\boldsymbol{y}},{\boldsymbol{\xi}})\mapsto a({\boldsymbol{x}},{\boldsymbol{y}},{\boldsymbol{\xi}},\omega) \big)
	\end{aligned}
	\]
	and
	\[%
	\begin{aligned}
	\Phi:\Omega&\to{\mathcal{M}}_{\alpha}(X\times Y \times \Xi)\\\omega&\mapsto \big( ({\boldsymbol{x}},{\boldsymbol{y}},{\boldsymbol{\xi}})\mapsto \Phi({\boldsymbol{x}},{\boldsymbol{y}},{\boldsymbol{\xi}},\omega) \big)
	\end{aligned}
	\] be measurable mappings, where the target spaces are equipped with the Borel $\sigma$-algebra. Then, for fixed $\psi\in{\mathcal{D}}(X)$ resp. $u\in{\mathcal{E}}'(X)$ the mappings
	\[ %
	\begin{aligned}
	A_{\Phi,a}:\Omega &\to {\mathcal{C}}^\infty(X)\\\omega &\mapsto A_{\Phi,a}[\psi]
	\end{aligned} \quad \text{resp.} \quad 	\begin{aligned}A_{\Phi,a}:
	\Omega &\to {\mathcal{D}}'(X)\\\omega &\mapsto A_{\Phi,a}[u]
	\end{aligned}
	\]
	are measurable with respect to the Borel $\sigma$-algebra, induced by the corresponding topology.
\end{theorem}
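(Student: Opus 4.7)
The plan is to reduce the measurability of $\omega \mapsto A_{\Phi_\omega, a_\omega}[\psi]$ (respectively $A_{\Phi_\omega, a_\omega}[u]$) to the composition of a Borel measurable map with a continuous one, leveraging the two corollaries immediately preceding the theorem. In other words, I aim to factor the map as
\[
\omega \;\longmapsto\; (\Phi_\omega, a_\omega) \;\longmapsto\; A_{\Phi_\omega, a_\omega}[\psi],
\]
where the first arrow is measurable by hypothesis and the second is the continuous evaluation from the preceding corollary.

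First, I would combine the two given coordinate maps $\omega \mapsto \Phi_\omega$ and $\omega \mapsto a_\omega$ into a single map $\omega \mapsto (\Phi_\omega, a_\omega)$ into the product space $\mathcal{F}_{\alpha,\varrho,\delta}^d(X,Y,\Xi)$ equipped with the product topology. The key technical point is that I must identify the Borel $\sigma$-algebra of this product topology with the product of the two Borel $\sigma$-algebras coming from the hypothesis. Here I would use separability: $\mathcal{M}_\alpha(X,Y,\Xi)$ is a closed subset of the separable metrizable space $\mathcal{M}_{hg}(X,Y,\Xi)$ (hence itself separable metrizable), and $\mathcal{S}^d_{\varrho,\delta}(X,Y,\Xi)$ was noted to be separable and metrizable as well. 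For a finite product of separable metric spaces, the Borel $\sigma$-algebra of the product equals the product of the Borel $\sigma$-algebras, so measurability of each coordinate implies joint Borel measurability into $\mathcal{F}_{\alpha,\varrho,\delta}^d$.

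Second, I would apply the continuity statements already established. For fixed $\psi \in \mathcal{D}(Y)$, the preceding corollary shows that $(\Phi,a) \mapsto A_{\Phi,a}[\psi]$ is continuous from $\mathcal{F}_{\alpha,\varrho,\delta}^d(X,Y,\Xi)$ into $\mathcal{C}^\infty(X)$. Since the composition of a Borel measurable map with a continuous map (both spaces equipped with their Borel $\sigma$-algebras) is Borel measurable, measurability of $\omega \mapsto A_{\Phi_\omega, a_\omega}[\psi]$ follows immediately. For $u \in \mathcal{E}'(Y)$, the argument is identical, except that $\mathcal{D}'(X)$ is equipped with the weak-$\ast$ topology and its Borel $\sigma$-algebra; the analogous corollary provides weak-$\ast$ continuity of $(\Phi,a) \mapsto A_{\Phi,a}[u]$, and composition again closes the argument.

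The main obstacle I anticipate is the identification of the product Borel $\sigma$-algebra with the Borel $\sigma$-algebra of the product topology, since it is precisely here that separability of the two factor spaces enters (without separability, only one inclusion holds in general). Once this point is carefully justified by invoking the countable bases coming from the metric structure established earlier, the remainder of the proof is a one-line composition argument and requires no new analytic estimates.
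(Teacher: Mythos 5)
Your proposal is correct and is essentially the paper's own argument: the paper gives no separate proof for this theorem, precisely because it is intended to follow by composing the (jointly) measurable map $\omega\mapsto(\Phi_\omega,a_\omega)$ with the continuous maps established in the two preceding corollaries, and the separability and metrizability of ${\mathcal{M}}_\alpha$ and ${\mathcal{S}}^d_{\varrho,\delta}$ were recorded earlier exactly so that the Borel $\sigma$-algebra of the product coincides with the product of the Borel $\sigma$-algebras. You have made explicit the one technical point the paper leaves implicit, which is a welcome addition rather than a deviation.
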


\section{Applications}
In this section we apply Theorem \ref{thm:sFIO_measurable} in three typical situations.
\begin{example}
	Let $X=Y={\mathbb{R}}$, $\Xi={\mathbb{R}}\backslash\MOMSmenge{0}$, $\alpha>0$ be a real constant and
	\[ {\mathcal{C}}^\infty_{\alpha}(X)=\MOMSmenge{f\in{\mathcal{C}}^\infty(X): \forall x\in X: f(x)\geq \alpha}, \]
	and let
	\begin{align*}
	c: \Omega&\to {\mathcal{C}}^\infty_{\alpha}(X)\\\omega &\mapsto \big( x\mapsto c(\omega,x))
	\end{align*}
	be a measurable map. The transport equation with speed $c$ is then
	\begin{align} \label{eqn:transport_eqn}
	\MOMSdgl{\big(\partial_t+c(\omega,x)\partial_x\big)\,u(\omega,x,t) &=0\\u(\omega,x,0)&=u_0(x).}
	\end{align}
	The characteristic curves $\gamma$ satisfy
	\begin{align} \label{eqn:char_eqn_phase_func_stoch}
	\MOMSdgl{ \frac{\MOMSd}{\MOMSd \tau} \gamma(\omega,x,t;\tau) &= c (\omega,\gamma(\omega,x,t;\tau) ) \\ \gamma(\omega,x,t;t)&=x.}
	\end{align}
	Then one can check by classical ODE theory that for fixed $t\in{\mathbb{R}},\omega\in\Omega$ the map
	\begin{align*}
	S:  {\mathcal{C}}^\infty_{\alpha}(X)&\to {\mathcal{C}}^\infty(X)\\
	(x\mapsto c(\omega,x) ) &\mapsto \big(x  \mapsto\gamma(\omega,x,t;0) \big)
	\end{align*}
	is continuous in the $(\pi_{X,\widetilde{m}})_{\widetilde{m}\in{\mathbb{N}}}$-sense.
	Therefore, for $t\in{\mathbb{R}}$ fixed,
	\begin{align*}
	\Phi_\omega: \Omega & \to {\mathcal{M}}_\alpha(X,Y,\Xi)\\
	\omega& \mapsto \big((x,y,\xi) \mapsto \xi\cdot (\gamma(\omega,x,t;0)-y) \big)
	\end{align*}
	is measurable again. Furthermore, let $a(x,y,\xi)\equiv1$ and let $u_0\in{\mathcal{D}}(X)$ resp. $u_0\in{\mathcal{E}}'(X)$. Then, for fixed $\omega$, the solution to \eqref{eqn:transport_eqn} is given by $A_{\Phi_\omega,a}[u_0]$, and the maps
	\[ %
	\begin{aligned}
	\Omega &\to {\mathcal{C}}^\infty(X)\\\omega &\mapsto A_{\Phi_\omega,a}[u_0]
	\end{aligned} \quad \text{resp.} \quad 	\begin{aligned}
	\Omega &\to {\mathcal{D}}'(X)\\\omega &\mapsto A_{\Phi_\omega,a}[u_0]
	\end{aligned}
	\]
	are measurable.
\end{example}

\begin{example}
	Consider as before $X=Y={\mathbb{R}}$, $\Xi={\mathbb{R}}\backslash\MOMSmenge{0}$. The half wave equation in one dimension is
	\begin{align}\label{eqn:half_wave_eqn}
	\MOMSdgl{\big(\partial_t +i c(\omega,x) P(D_x)\big)\, u(\omega,x,t)&=0\\u(\omega,x,0&)=u_0(x),}
	\end{align}
	where $P$ is the pseudodifferential operator with symbol $P(\xi)=\MOMSabs{\xi}(1-\chi(\xi)),$ where $\chi\in{\mathcal{D}}({\mathbb{R}})$, $\chi(\xi)\equiv1$ for $\MOMSabs{\xi}<1/4$ and $\chi(\xi)\equiv0$ for $\MOMSabs{\xi}>1/2$. We assume that $c:\Omega\mapsto{\mathcal{C}}^\infty_\alpha(X)$ is measurable and that $c(\omega,x)$ and $\MOMSabs{\partial_x c(\omega,x)}$ are bounded from above by a constant $R$ for any $x\in{\mathbb{R}}$ and $\omega\in\Omega$. Further, all seminorms $\pi_{X,{m}}(c(\omega,\cdot))$ are assumed to be bounded independently of $\omega$.
	
	We fix $\omega\in\Omega$ for now and for notational simplicity we will drop it. A FIO parametrix for \eqref{eqn:half_wave_eqn} can be constructed following \cite[Chapter VIII, §3]{METaylor1981}. The phase function of the parametrix is of the form $\Phi(x,y,\xi)=\MOMSabs{\xi}\phi\big(x,\frac{\xi}{\MOMSabs{\xi}},t\big)-y\cdot\xi$, where $\phi$ satisfies the eikonal equation,
	\begin{align}
	\MOMSdgl{ \partial_t\phi(x,t) + c(x) P(\partial_x \phi(x,t))&=0\\ \phi(x,0)  &=x \cdot \xi.}
	\end{align}
	This is a nonlinear differential equation of the form $Q(x,t,\phi_x,\phi_t)=0$.  In \cite[Chapter II, Sect 19]{FTreves1975} one can find an explicit representation of the solution to this problem, which we will write down in the following.
	Let $F(t;x_1,\xi_1)$ and $G(t;x_1,\xi_1)$ satisfy the following equations
	\begin{align}
	\MOMSdgl{\frac{\MOMSd F}{\MOMSd t}&= c(F) P'(G)\\ F(0;x_1,\xi_1)&=x_1}
	\qquad \MOMSdgl{\frac{\MOMSd G}{\MOMSd t}&= -c'(F) P(G)\\ G(0;x_1,\xi_1)&=\xi_1.}
	\end{align}
	There exists a $T\in{\mathbb{R}}$, such that
	\[ P(G(t;x_1,\xi_1))\equiv\MOMSabs{\xi_1} \quad \text{and} \quad  P'(G(t;x_1,\xi_1))\equiv\sign{\xi_1} \]
	for $0\leq t\leq T$ and for all $\xi_1,x_1\in\mathbb{R}$, $\MOMSabs{\xi_1}\geq1$. In that case $F$ does not depend on $G$ and $x=F(t;x_1,\xi_1)$ is just the flow from $(0,x_1)$ to $(t,x)$. The inverse flow is then $F(-t,x,\xi_1)$, which goes from $(t,x)$ to $(0,x_1)$. Finally, $\phi$ is given by
	\begin{align}\label{eqn:representation_of_phi}
	\phi(x,t,\xi)=x\xi-\int_0^t c(x) P(G(F(-s;x,\xi),s,\xi))  \MOMSd s.
	\end{align}
	for $t\in[0,T]$. We observe that $F$ depends only on the sign of $\xi_1$ and $\lambda G(t;x_1,\xi_1)= G(t;x_1,\lambda\xi_1)$ for $\lambda\geq 1$, $\MOMSabs{\xi_1}>1$ and all $x\in{\mathbb{R}}$ and $t\in[0,T]$. Thus,
	\[ \phi(x,t,\xi)-\MOMSabs{\xi}\phi\bigg(x,t,\frac{\xi}{\MOMSabs{\xi}}\bigg) \]
	is compactly supported with respect to $\xi$ for all $x\in{\mathbb{R}}$ and $t\in[0,T]$.  Noting that $\phi(x,0,\xi)=x\xi$ and by the representation \eqref{eqn:representation_of_phi} one can also see that for a sufficiently small time interval, there is an $\alpha$ such that $\Phi\in{\mathcal{M}}_\alpha(X,Y,\Xi)$.
	
	Finally, by classical ODE theory, one can show that $F$ and $G$ continuously depend on $c$ and thus also $\Phi$ (at least for a short time interval).

	To obtain the amplitude one has to solve a cascade of transport equations of the form
	\begin{align*}
		\big(\partial_t-c(x)P'(\xi)\partial_x + H(x,t,\xi)\big) a_j(x,t,\xi)=0, \qquad j\leq 0,
	\end{align*}
	where $H$ is a smooth function, depending on derivatives of $c$,$\phi$ and $a_k,k>j$ (for details see again \cite[Chapter VIII, §3]{METaylor1981}). Similarly to the previous example one can show that $a_j$ continuously depends on $c$. In \cite{MAShubin1987} one can find an explicit construction of a function $a$ such that $(a-\sum_{j\leq 0} a_j(x,\xi,t))$ is smoothing, namely
	\[ a(x,t,\xi)=\sum_{j\leq0} (1-\chi(\xi/n_{j})) a_{j}(x,t,\xi), \]
	where $\chi\in{\mathcal{D}}({\mathbb{R}})$, as above, $\chi(\xi)\equiv1$ for $\MOMSabs{\xi}<1/4$ and $\chi(\xi)\equiv0$ for $\MOMSabs{\xi}>1/2$, and $n_j\in{\mathbb{N}}$ approaches $+\infty$ quickly enough as $j$ goes to $-\infty$. Since the $a_j$ depend continuously on $c$ in the $\mathcal{C}^\infty$-topology, one can chose $n_j$ such that the series converges uniformly together with all derivatives when $c$ is taken from a bounded set in $\mathcal{C}_\alpha^\infty(X)$. Thus, $a$ depends continuously on $c$ as well in the corresponding topologies.
	
	Consequently, the phase function $\Phi$ and the amplitude function $a$ of the parametrix $A_{\Phi,a}$ depend continuously on $c$ in the corresponding topologies.
	
	Recall that $c:\Omega\to\mathcal{C}_\alpha^\infty(X)$ is a random function. At fixed $\omega\in\Omega$, one obtains the parametrix $A_{\Phi_\omega, a_\omega}$. By Theorem \ref{thm:sFIO_measurable}, if $u_0\in {\mathcal{D}}({\mathbb{R}})$ resp. $u_0\in{\mathcal{E}}'({\mathbb{R}})$, then the map
	\[ \omega\to A_{\Phi_\omega,a_\omega}[u_0](x,t) \]
	is measurable.

\end{example}

\begin{example}
The standard bottom-up approach to modeling waves in random media would place the randomness in the coefficients of the underlying PDEs. However, the solution depends in a strongly nonlinear way on the coefficients of the equation, even if the PDEs are linear. This makes it hard to track or compute the stochastic features of the solution, such
as the expectation, the variance or the autocovariance function. Accordingly, a top-down approach has been proposed in \cite{MOberguggenberger2018,MPSchwarz2019}. Starting from the mean field equations as constant coefficient PDEs, the deterministic solution can readily be represented by FIOs. The stochastic properties of the medium are then modeled
\emph{a posteriori} by random perturbations of the phase and amplitude functions. In applications to material sciences, e.g., damage detection, these random perturbations can be calibrated to measurement data (just as in the bottom-up approach, where the coefficients are calibrated to measurement data). This program has been carried through in
\cite{MPSchwarz2019} in the case of three-dimensional linear elasticity.

The approach leads to stochastic FIOs, whose measurability has to be proven. We present a simplified example using the scalar wave equation.

Let $Y={\mathbb{R}}^{n}$, $X={\mathbb{R}}^n\times {\mathbb{R}}$ and $\Xi={\mathbb{R}}^n\backslash\MOMSmenge{\boldsymbol{0}}$. One can solve the deterministic wave equation with constant wave speed $c_0$
	\[ \MOMSdgl{(\partial_{tt} -c_0 \Delta) u({\boldsymbol{x}},t)&=0\\u({\boldsymbol{x}},0)&=u_0({\boldsymbol{x}}) \\ \partial_tu({\boldsymbol{x}},0)&=0} \]
	using a linear combination of FIOs with phase functions
	\[ \Phi({\boldsymbol{x}},t,{\boldsymbol{\xi}},{\boldsymbol{y}})=\langle {\boldsymbol{x}}-{\boldsymbol{y}},{\boldsymbol{\xi}} \rangle \pm c\MOMSnorm{{\boldsymbol{\xi}}} t  \]
	and amplitude function
	\[ a({\boldsymbol{x}},{\boldsymbol{y}},{\boldsymbol{\xi}})\equiv \frac{1}{2}.\]
	Then, instead of using the deterministic $\Phi$ and $a$, randomly perturbed version are introduced. One has to make sure that the randomly perturbed phase function is still an operator phase function. Thus fix $\alpha > 0$ and let
	\begin{align*}
	c: \Omega&\to {\mathcal{C}}_\alpha^\infty({\mathbb{R}}^n)\\\omega &\mapsto \big( {\boldsymbol{x}}\mapsto c(\omega,{\boldsymbol{x}}))
	\end{align*}
	be a measurable, smooth random field with ${\mathbb{E}}(c(x))\equiv c_0$. Let $\Phi_\omega({\boldsymbol{x}},t,{\boldsymbol{y}},{\boldsymbol{\xi}})=\MOMSsklammer{{\boldsymbol{x}}-{\boldsymbol{y}},{\boldsymbol{\xi}}}\pm\MOMSnorm{{\boldsymbol{\xi}}}c({\omega,\boldsymbol{x}})t$. Then, $\Phi_\omega\in{\mathcal{M}}_{\alpha}(X,Y,\Xi)$. Note that this is guaranteed, since we included the time into the image space $X$ and $\MOMSnorm{\boldsymbol{\xi}}^{-1}\MOMSabs{ \partial_t \Phi_\omega(\boldsymbol{x},t,\boldsymbol{y},\boldsymbol{\xi})}\geq \alpha$.
	
	Furthermore, let \[
	\begin{aligned}
	a: \Omega&\to {\mathcal{S}}_{\varrho,\delta}^d(X,Y,\Xi)\\
	\omega&\mapsto a_\omega({\boldsymbol{x}},t,{\boldsymbol{y}},{\boldsymbol{\xi}})
	\end{aligned}
	\] be random with ${\mathbb{E}}(a_\omega({\boldsymbol{x}},{\boldsymbol{y}},{\boldsymbol{\xi}}))=\tfrac{1}{2}$. Then, $(\Phi_\omega,a_\omega)\in{\mathcal{F}}_{\alpha,\varrho,\delta}^d(X,Y,\Xi)$ and, as above, $\omega\to A_{\Phi_\omega,a_\omega}[u_0]$ is measurable for a given $u_0\in{\mathcal{D}}(X)$ resp. $u_0\in{\mathcal{E}}'(X)$ in the corresponding topology. If $\Phi$ and $a$ are independent, the expectation, the variance and the autocovariance function of $A_{\Phi_\omega,a_\omega}[u_0]({\boldsymbol{x}})$, resp. $\langle A_{\Phi_\omega,a_\omega}[u_0],\psi\rangle$ can be computed without much difficulty, observing that ${\mathbb{E}}\big(\exp(i\Phi_\omega({\boldsymbol{x}},t,{\boldsymbol{\xi}},{\boldsymbol{y}}))\big)$ is just the characteristic function of the random variable $\Phi_\omega({\boldsymbol{x}},t,{\boldsymbol{\xi}},{\boldsymbol{y}})$, see \cite{MOberguggenberger2014}.

\end{example}

\vspace{2mm}
\noindent
{\bf Acknowledgement.}
	This work was supported by the grant P-27570-N26 of FWF (The Austrian Science Fund).

\end{document}